\newtheorem{theorem}{Theorem}
\newtheorem*{theorem*}{Theorem}
\newtheorem*{lemma*}{Lemma}
\newtheorem{proposition}[theorem]{Proposition}
\newtheorem{corollary}[theorem]{Corollary}
\newtheorem{lemma}[theorem]{Lemma}
\newtheorem{question}[theorem]{Question}
\newtheorem*{remark*}{Remark}
\theoremstyle{definition}
\newtheorem{definition}{Definition}
\newtheorem*{definition*}{Definition}
\theoremstyle{remark}
\newcommand{\rev}{}
\newcommand{\R}{\mathbb{R}}
\newcommand{\dd}{\mathop{}\!\mathrm{d}}
\newcommand\numberthis{\addtocounter{equation}{1}\tag{\theequation}}
\begin{document}

\title[]{Minimizing optimal transport for functions with fixed-size nodal sets}
\keywords{Wasserstein, Uncertainty Principle, Nodal Set, Metric Graph, Optimal Partition}
\subjclass[2020]{28A75, 49Q05, 49Q22, 52C35}

\author{Qiang Du}
\address{Department of Applied Physics and Applied Mathematics, and Data Science Institute, Columbia University, 500 W120 Street, New York, NY 10027, USA}
\email{\href{qd2125@columbia.edu}{qd2125@columbia.edu}}
\author{Amir Sagiv}
\address{Department of Applied Physics and Applied Mathematics, Columbia University, 500 W120 Street, New York, NY 10027, USA}
\email{\href{as6011@columbia.edu}{as6011@columbia.edu}}
\maketitle

\begin{abstract}
Consider the class of zero-mean functions with fixed $L^{\infty}$ and $L^1$ norms and exactly $N\in \mathbb{N}$ nodal points. Which functions $f$ minimize $W_p(f_+,f_-)$, the Wasserstein distance between the measures whose densities are the positive and negative parts? 
We provide a complete solution to this minimization problem on the line and the circle, which provides sharp constants for previously proven ``uncertainty principle''-type inequalities, i.e., lower bounds on $N\cdot W_p (f_+, f_-)$.
We further show that, while such inequalities hold in many metric measure spaces, they are no longer sharp when the non-branching assumption is violated; indeed, for metric star-graphs, 
the optimal lower bound on $W_p(f_+,f_-)$ is not inversely proportional to the size of the nodal set, $N$.
Based on similar reductions, we make connections between the analogous problem of minimizing $W_p(f_+,f_-)$ for $f$ defined on $\Omega\subset\R^d$ with an equivalent optimal domain partition problem.
\end{abstract}

\section{Introduction}

The study of nodal sets (or zero sets) of functions is a classic topic in analysis. Loosely speaking, the key question is, for a given a function, {\em ``how big'' its nodal set $Z(f) = \{x ~  |  ~ f(x)=0\}$ is.}\footnote{Here we use the term ``nodal set'' for a broad class of functions (see Section \ref{sec:settings}), whereas in  other places it refers to the zero sets of a special class of functions, e.g., Laplacian eigenfunctions.} 

In recent years, this question has been connected
to the notion of {\em optimal transport}. The intuition is as follows: 
given a sufficiently regular metric Borel probability space $\Omega$ (e.g., a domain in~$\R ^d$ or a Riemannian manifold)
and a sufficiently regular {\rev and bounded}  real-valued function $f:\Omega \to \R $ with mean zero over $\Omega$, we consider its positive and negative parts $f_{\pm} (x)  \equiv \pm \max (\pm f(x), 0)$ as densities, and the nodal set $Z(f)$ as the interfaces of their supports.  If the nodal set is {\em small}, then  there should be some regions in the support of $f_+$ that cannot be near the support of $f_-$. Therefore, the cost of transporting $f_+$ to $f_-$ cannot be arbitrarily small.

A common metric which quantifies the notion of transport cost is the {\em Wasserstein-$p$ distance} $W_p (f_+, f_-)$. To make the connection between the nodal set and the transport cost more transparent, we ask:
\begin{equation}\label{eq:loose_q}
 \text{Given a nodal set of a certain size, which functions minimize $W_p(f_+, f_-)$?}
\end{equation}
The answer to this minimization problem should depend on the norms of $f$; first, the larger $\|f\|_1$ is, the more mass there is to transport. Second, the more localized the function is, as measured in this work by $\|f\|_{\infty}$, the more mass can be concentrated near the interface, thus decreasing the transport distance and cost. 

Before rigorously formalizing our question (see Question \ref{q:minim} in Section \ref{sec:settings}), we note that the motivation behind it originates from
the following type of inequalities: for a domain ~$\Omega$,
\begin{equation}\label{eq:uncert}
 W_1(f_+, f_-) \cdot {\rm Surf}\left\{x \in \Omega: f(x) = 0 \right\} \geq c(\Omega)  \left( \frac{\|f\|_{L^1}}{\|f\|_{L^{\infty}}} \right)^{\rho} \|f\|_{L^1} \, , \qquad \forall f\in C^0(\bar{\Omega}),
 \end{equation}
where $\rho >0$, $c(\Omega)>0$ is a constant that depends only on the domain $\Omega$, and ${\rm Surf}$ is a surface measure, e.g., the co-dimension $1$ Hausdorff measure. The first inequality of this type was proven by Steinerberger \cite{steinerberger2020metric} for two-dimensional domains such as the unit square $[0,1]^2$, with $\rho =1$. This result was then generalized to certain compact domains in $\R^d$ with arbitrary $d\geq 1$ by Steinerberger and the second author with $\rho = 4-1/d$, see \cite{sagiv2020transport}.
Subsequently, Carroll, Massaneda, and Ortega-Cerd\`{a}~\cite{carroll2020enhanced} improved the exponent to $\rho = 2-1/d$, and Cavalletti and Farinelli proved that the optimal $\rho=1$  is true in all dimensions~\cite{cavalletti2021indeterminacy}. The analysis in \cite{cavalletti2021indeterminacy} takes place at a much larger class of metric measure spaces that satisfy the Curvature-Dimension (${\rm CD}(K,N)$) condition, where the nodal set can be measured using the notion of Perimeter \cite{ambrosio2014equivalent, miranda2003functions}. That result was then also proven for ${\rm RCD}(K,\infty)$ metric spaces \cite{ambrosio2015riemannian, ambrosio2014metric} by De-Ponti and Farinelli for all $p\geq 1$ \cite{deponti2022indeterminacy}.

Despite the great level of generality of the above results, some fundamental questions still remain: is the inequality \eqref{eq:uncert} with an exponent $\rho=1$ sharp? First, we do not know what the optimal constant~$c(\Omega)$~is. More fundamentally, one may ask whether a {\em multiplicative} inequality is the natural one. Indeed, some works studied related additive inequalities \cite{buttazzo2020wasserstein, candau2022existence, novack2021least, xia2021existence} regarding transport of the Lebesgue measure between disjoint domains. \footnote{\rev In particular, \cite[Proposition 2.5]{candau2022existence} and \cite[Lemma 2.8]{novack2021least} consider the overall transport outside of a bounded set $E\subset \R ^d$ and prove that a lower bound on the transport cost must be inversely proportional to the perimeter of $E$, i.e., if $F\subseteq E^c$ with volume $|E|=|F|=1$, then $W_p(\mathbbm{1}_E,\mathbbm{1}_F)>C(d)/{\rm Per}(E)$ for some universal $C(d)>0$.}  

But even the most general results regarding inequalities of the form \eqref{eq:uncert} require an essentially non-branching property \cite{cavalletti2021indeterminacy, deponti2022indeterminacy}: all ${\rm RCD}(K, \infty)$ spaces satisfy this property \cite{rajala2014non}, whereas the result of \cite{cavalletti2021indeterminacy} is proven only to ${\rm CD}(K,N)$ spaces which satisfy this property. {\em A primary takeaway of our study is that it is indeed necessary in order for inequalities like \eqref{eq:uncert} to be sharp.}

\subsection{Main results}\label{sec:mainresults}
In this paper, we formalize the minimization problem \eqref{eq:loose_q} and establish a strategy to its solution. We find the minimizers on three families of one-dimensional domains: a line (or an interval), a circle, and metric star graphs. The analysis of these domains leads us to the following key conclusions:
\begin{figure}[h]
\centering
{\includegraphics[scale=1.5]{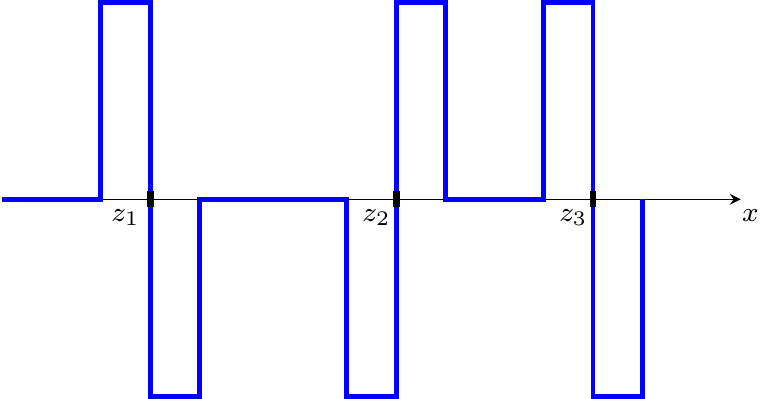}}
\caption{
An exemplary minimizer of $W_p(f_+,f_-)$ with $3$ nodal points on an interval.}
\label{fig:steps_opt}
\end{figure}
\begin{itemize}
    \item {\bf Generalized nodal sets for $L^1 \cap L^{\infty}$ functions.} Common to all these     settings are that the minimizers {\em are step functions}, and therefore do not have nodal sets in the usual sense. Hence, it is crucial to extend the class of admissible functions and notion of nodal sets of continuous functions to a broader class of $L^1 \cap L^{\infty}$ functions. Furthermore, it may be expected that this would be the right formulation through which to explore the minimizers in higher dimensions (see Section \ref{sec:outlook}).
    \item \textbf{Strategy.} Our work establishes a strategy to approach the minimization problem under consideration through a series of reductions of the class of feasible functions. The reduction strategy consists of three stages: {\bf (1)} We reduce the functional minimization problem to a geometric one. Fixing the nodal domain, i.e., the interface between the positive and negative parts (see Definition \ref{def:nodal}), we show that it is always preferable to allocate the $L^1$ mass near the nodal domain. Thus, given a function $f$ we find a new function $g$ which is (up to a sign and scaling) the  indicator function of {\em a subset} of the supports of $f_{\pm}$, and for which the overall transport cost is cheaper. {\bf (2)} We find that there is always a preferable configuration of the mass and the nodal points such that the optimal transport plan only couples adjacent intervals. {\bf (3)} Steps (1) and (2) reduce the infinite-dimensional variational problem to a  constrained finite dimensional problem, which one can solve by elementary means.
 \item {\bf The minimizers} By posing the question as a minimization problem, our work goes beyond previous works on the subject since we gain insight into the minimizers themselves. In the simplest case, the interval (or for every smooth non-intersecting curve), the minimizers are step functions whose only possible values are either $\pm \|f\|_{\infty}$ or~$0$, as can be seen in Fig.\ \ref{fig:steps_opt}. Thus, these are functions for which the optimal transport between $f_+$ and $f_-$ is local, across a single interface. Next, even though the circle introduces a new {\em global} structure, we show that the nature of the problem (and the minimizers) do not change.

    \item {\bf Sharp multiplicative inequalities for the interval and the circle.} 
  The complete solution of the minimization problem
     allows us to explore the sharpness of the multiplicative  ``uncertainty principle''  \eqref{eq:uncert}. For an interval,
     we find that multiplicative inequalities are
     optimal (Section \ref{sec:interval}). The sharp inequality for $p\geq 1$ is
    $$  W_p(f_+, f_-)  \cdot |Z(f)|\geq 2^{-1-\frac1p} \left(\frac{\|f\|_1}{\|f\|_{\infty}} \right) \|f\|_1^{\frac1p} \, .$$
    We further show that the same conclusion is not limited to an interval domain but also remains valid for a circle (see Section \ref{sec:circle}). This, in turn, leads us to seek for an example of domains, e.g., a metric star graph, for which the sharp lower bound is not expected to {\rev be} multiplicative (see Section \ref{sec:trees}).

    \item {\bf Metric star graphs.} We carry our strategy to the more complicated case of metric star graphs (e.g., we need to solve Kontorvich' problem and not Monge's); Indeed, as before, Theorem \ref{thm:starequi} reduces the general minimization problem over $L^1\cap L^{\infty}$ into a finite-dimensional constrained optimization problem. The key difference, however, is that in contrast to the line and the circle,  the optimal relation between the minimal transport cost and the other factors under consideration is {\em not multiplicative} on star graphs. The introduction of a node yields optimal lower bounds which depend on the geometry of the domain. For example, in the case of a star with $D\in \mathbb{N}$ sufficiently long edges, 
    $$W_1(f_+, f_-)\tilde{N} \geq \frac14 \frac{\|f\|_1}{\|f\|_{\infty}}\|f\|_1 \, , \qquad \tilde{N} = |Z(f)|-1 + \left\{ \begin{array}{cc}
         \frac{D}{2} \, ,& D~{\rm even} \, , \\
         \frac{(D+1)(D-1)}{2D} \, , & D~{\rm odd} \, .
    \end{array} \right. $$
    When some of the edges are not sufficiently long, even more complicated forms of inequalities emerge. Our conclusions for star graphs echo and contrast the analysis of \cite{cavalletti2021indeterminacy}, where the multiplicative lower bound \eqref{eq:uncert} is proved for a broad class of non-branching spaces. Since stars, and metric graphs in general, are branching spaces (see also \cite{erbar2021gradient}), one might expect a different type of results, as we indeed prove in Section \ref{sec:trees}.
      
\end{itemize} 
The strategy and issues identified in this work are expected to be generalizable to high dimensional settings. The one-dimensional settings help establish the strategy in its simplest form, and allow us to reach easy-to-compute and precise constants. Furthermore, by considering graphs, we are able to expose the key challenges in going beyond all previous works on non-branching spaces. In Section \ref{sec:outlook}, we outline the conjectures and key challenges in generalizing our strategy to multi-dimensional domains, as well as to general metric graphs.

\subsection{Structure of the paper} Preliminaries, definitions, and the formulation of our minimization problem (Question \ref{q:minim}) are given in Section \ref{sec:settings}. Then, the problem is solved for the interval (Theorem~\ref{thm:w1_min}) and the circle (Theorem \ref{thm:wpcir}) in Sections \ref{sec:interval} and \ref{sec:circle}, respectively. For star graphs, in Section \ref{sec:trees} we re-formulate Question \ref{q:minim} as a finite-dimensional constrained minimization problem (Theorem \ref{thm:starequi}) and solve it for a number of special cases. Finally, an outlook on the problem in multiple dimensions and on general metric graphs is presented in Section \ref{sec:outlook}.
\section{Settings}\label{sec:settings}
\subsection{Nodal Sets.}
Given a one-dimensional domain $I$, constants $c_{\infty}, c_1>0$, and $N\in \mathbb{N}$, define
\begin{equation}\label{eq:setX}
X= X(c_{\infty}, c_1, N, I) \equiv \left\{ f:I\to \R~~{\rm measurable} ~~ {\rm such}~{\rm that} ~~ \begin{array}{l}
\|f\|_{\infty} = c_{\infty} \, , \\
\|f\|_{1} = c_1 \, , \\
\left| Z(f)  \right| = N \, , \\
\int_I f(x) \, dx = 0 \, .
\end{array}   ~ ~  \right\} \, ,
\end{equation}
where $Z(f)$ is the set of points where $f$ changes its signed, defined as follows:
\begin{definition}[effective nodal set]\label{def:nodal}

    Consider a measurable $f:I\to \R$ with finite $L^1$ and $L^{\infty}$ norms.
    \begin{itemize}
    \item Define
    $$Z_1(f) \equiv   \partial \{x~|f(x)>0\} \cap \partial \{x~|f(x)<0\}  \, .$$
    \item Let $\mathcal{Z}(f) \equiv \pi_0 \left(f^{-1}(\{0\})\right)$, the set of all connected components of $f^{-1}(\{0\})$.
    \item Let $\mathcal{Z}'(f)\subseteq \mathcal{Z}(f)$ be the set of all elements of $\mathcal{Z}(f)$ whose closure intersects both  $\partial \{x~|f(x)>0\}$ and  $\partial \{x~|f(x)<0\}$.
    \item Let $Z_2(f)$ be a set which consists of a unique point $x\in J$
    for every $J\in \mathcal{Z}' (f)$.

    \item Finally, define the effective nodal set as $Z(f) \equiv Z_1(f) \cup Z_2(f)$.
\end{itemize}
\end{definition}

\begin{figure}[h]
\centering
{\includegraphics[scale=1.5]{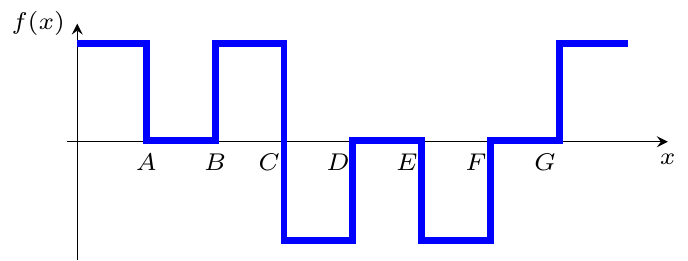}}
\caption{
Under Definition \ref{def:nodal}, the portrayed function has $|Z(f)|=2$. }
\label{fig:zero_nz}
\end{figure}

Our intention is to define $Z(f)$ in a way which exactly captures every sign change of $f$ once. Let us see that this is indeed achieved by the definition: consider, for example, the function in Fig.\ \ref{fig:zero_nz}. While $f=0$ on $[A,B]$, and so $[A,B]\in \mathcal{Z}(f)$, it is not an element of $\mathcal{Z}'(f)$, since it does not intersect with $\partial \{ f<0\}$. Hence, no point in $[A,B]$ is contained in $Z(f)$. The same is true for $[D,E]$. By definition, $C\in Z_1(f)$. Lastly, $[F,G]\in {\rev \mathcal{Z}'(f)}$, and therefore $F\in Z_2 (f)$. Overall, $f$ changes its sign exactly twice and indeed $|Z(f)|=|\{ C, {\rev F}\} |=2$. 

We contrast our definition with those used in previous works:
\begin{itemize}
    \item In \cite{sagiv2020transport, steinerberger2020metric}, the objects of study are continuous functions, and therefore the interface between the supports of $f_+$ and $f_-$ is always contained in the set $f^{-1}(\{0\})$. For general measurable functions, therefore, Definition~\ref{def:nodal} of the effective nodal set {\em need not} coincide with $ f^{-1}(\{0\})$ for $f\in C^0 (I)$ (consider e.g., $f(x)=x^2$ on $[-1,1]$). Moreover, for functions which are merely bounded but not continuous, these interfaces are not necessarily in $f^{-1}(\{0\})$, and hence the somewhat more complicated form of Definition \ref{def:nodal}.
    
   \item The authors of \cite{cavalletti2021indeterminacy} considered the quantity ${\rm Per}(\{x~|~f(x)>0\})$. First note that, in Euclidean settings, the Perimeter coincides with the Hausdorff measure $\mathcal{H}^{d-1}$ of the reduced boundary of $\{f>0\}$, and in particular for $d=1$ this is just the cardinality of the set. 
   
   Furthermore, the zero set in \cite{cavalletti2021indeterminacy}, $\partial\{x|f(x)>~0\}$, includes points where $f$ does not changes signs, e.g., $A$ and $B$ in Figure \ref{fig:zero_nz}. This is not an issue when seeking lower bounds of the type \eqref{eq:uncert}, as adding points which are not interfaces between $f_+$ and $f_-$ can only {\em increase} the left hand side of \eqref{eq:uncert}. Since we solve minimization problems on functions with exactly $N$ sign changes, it is easier if we avoid such issues, even at a cost of a slightly more elaborated definition.
\end{itemize}

\subsection{Optimal Transport and the Wasserstein distance.}
We briefly recall the definition and certain key properties of the {\em Wasserstein-$p$ distance}. We refer to \cite{santa2015optimal, villani2003topics} for a more comprehensive treatment of this topic.  Let $p\geq 1$, $\Omega \subseteq \R ^n$ a Borel set, and denote by $\mathbb{P}_p(\Omega)$ the set of all Borel probability measures on $\Omega$ with finite $p$-th moments. Define the Wasserstein-$p$ distance between two measures 
 $\mu_1, \mu_2 \in \mathbb{P}_p(\Omega)$ as
 \begin{equation}\label{eq:wasdef}
    W_p(\mu_1, \mu_2) \equiv \inf\limits_{\gamma \in \Gamma(\mu_1,\mu_2)} K_p^{\frac{1}{p}}(\gamma) \, , 
    \qquad K_p(\gamma) \equiv \int\limits_{\Omega \times \Omega} |x-y|^p \, \dd \gamma (x,y) \, ,
\end{equation}
where $|x-y|$ is the geodesic distance on $I$ and $\Gamma (\mu_1, \mu_2)$ is the set of all Borel probability measures on $\Omega \times \Omega$ with marginals $\mu_1$ and $\mu_2$, i.e.,
\begin{equation}\label{eq:Pi}
    \mu_1(A) = \gamma(\Omega\times A) \, , \qquad \mu_2 (A)=  \gamma(A\times \Omega) \, ,
\end{equation}
for any $\gamma \in \Gamma(\mu_1, \mu_2)$ and any Borel set $A\subseteq \Omega$. 

A measure $\gamma$ is often called a {\em transport plan} and $\gamma$ minimizing $K_p$ is said to solve the {\em Kantorovich problem}. In some cases, there exists an optimal transport {\em map}, a function $T:\Omega\to \Omega$ which solves the so-called {\em Monge problem}:
   \begin{equation}\label{eq:monge}
       \inf\limits_{\{T  ~~|~~ T_{\#}\mu_1 = \mu_2 \} } K_p^{\frac{1}{p}}(T) \, , 
    \qquad K_p(T) \equiv \int\limits_{\Omega} |x-T(x)|^p \, d\mu_1 (x) \, ,
    \end{equation}
    where by $T_{\#}\mu_1$ we mean the pushforward of $\mu_1$ by $T$, i.e., the measure which assigns to any Borel set $A\subseteq \Omega$ the measure $T_{\#}\mu_1 (A) = \mu_1 (T^{-1}(A))$. Any map that pushes $\mu_1 $ to $\mu _2$ induces a transport plan $({\rm id}, T)_{\#}\mu_1\in \Gamma(\mu_1, \mu_2)$, and the transport cost is unambiguously defined, i.e., we can write $K_p (T)$ as a shorthand for $K_p(({\rm id}, T)_{\#}\mu _1)$. 
    
    On an {\em interval} $I\subseteq \R$, for any $p\geq 1$ and any two atomless measures, an optimal transport plan is induced by an optimal transport {\em map} \cite[Theorem 2.9]{santa2015optimal}, i.e., $W_p^p(\mu_1, \mu_2) = K_p(T)$, with a monotonically increasing $T$ defined by
    \begin{equation}\label{eq:Tmono}
    T= F_{\mu_1}^{-1} \circ F_{\mu_2} \, ,
    \end{equation} where $F_{\mu}(y) \equiv \mu(-\infty, y)$ is the cumulative distribution function (CDF), and the inverse is taken in the generalized sense, $F^{-1}_{\mu}(x) \equiv \inf \{t\in \R ~|~ F_{\mu}(t)\geq x \}$.\footnote{The statement holds more generally for the optimal transport with respect to any convex cost function $h(x-y)$ on the line, but we will not pursue this level of generality here.} For $p>1$, this map is also unique~\cite{santa2015optimal}. Hence, the Wasserstein-$p$ distance has the much simpler form
\begin{equation}\label{eq:wp_icdf}
    W_p (\mu_1, \mu_2) = \|F_{\mu_1}^{-1} - F_{\mu_2}^{-1} \|_{L^p(\R)} \, .
\end{equation}
In the particular case of $p=1$, one gets the more straightforward formula with the CDFs (and not their inverses) $W_1(\mu_1, \mu_2) = \|F_{\mu_1}-F_{\mu_2}\|_{L^1(\R)}$, see \cite{salvemini1943sul, vallender1974calculation}.

Our main question can be rigorously formalized as
\begin{question}\label{q:minim}
For a one-dimensional domain $I$ with nonzero measure $|I|>0$, $p\geq 1$, $c_{\infty}>0$,
$c_1\in (0, c_{\infty}|\Omega|]$, and $N\in \mathbb{N}_+$, what are the minimizers and the minimum value of the
minimization problem
\begin{equation}\label{eq:minprob}
    \min_{ f\in X(c_{\infty}, c_1, N, I) }
    W_p (f_+, f_-) \,.
    \end{equation}
\end{question}
By $W_p (f_+ , f_-)$, we mean the $W_p$-distance between the measures whose densities are $f_+$ and $f_-$. Note that $c_1\in (0, c_{\infty}|\Omega|]$ and $N>0$ are specified in the statement of Question \ref{q:minim} as {\rev necessary and sufficient} condition for $X(c_{\infty}, c_1, N, I)\neq \emptyset$.  

A crucial ingredient to study the above minimization problem is to 
establish an equivalence formulation that provides a characterization of the minimizers to the original problem. Connected to this, 
we present a sub-class of functions, defined below:
\begin{definition}\label{def:Xs}
Let $I$ be a one dimensional domain (a curve or a metric graph). Denote by $X_s =X_s(c_{\infty},c_1,N,I)$ the set of step-functions $f\in X(c_{\infty},c_1,N,I)$  such that $f=~\pm c_{\infty}$ only on intervals adjacent to points in $Z(f)$ and $0$ everywhere else; see, e.g., {\rev Figure \ref{fig:steps_opt} and Figure~\ref{fig:opt_int}(B)}.
\end{definition}

\section{The Interval}\label{sec:interval}

In this section, we study the case of a nonempty interval $I = (0,L)$ with $L>0$.
Our strategy to answer Question \ref{q:minim}, here and throughout this paper, is that of {\em optimization;} for every candidate function $f\in X$, we attempt to construct $g\in X$ such that $W_p(f_+,f_-) > W_p(g_+,g_-)$. The minimizers will therefore be the only functions {\rev for which further optimization is not possible,} and we will show that, by construction, those are also global minimizers.

\begin{lemma}\label{lem:steps}
Let $I=(0,L)$.
For every $f\in X= X(c_{\infty},c_1,N,I)$, denote $Z(f) = \{z_1 , \ldots , z_N \}$ with $z_i <z_{i+1}$ for all $1\leq i <N$. Then, there exists a function $g \in X_s$ such that
\begin{enumerate}[label=(\roman*)]
\item $Z(f) = Z(g)$
\item For any $z_i \in Z(g)$, $g\geq 0$ on $I_i=(z_i, z_{i+1})$ if and only if $f\geq 0$ there, and
\[\int_{I_i} g(x) \,dx = \int_{I_i} f(x) \, dx.\]

\item $W_p (f_+,f_-) \geq W_p (g_+, g_-)$ for any $p\geq 1$, with strong inequality if $f\not\in X_s$.
\end{enumerate} 
\end{lemma}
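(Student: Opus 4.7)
My plan is to construct $g$ explicitly on each subinterval between consecutive nodal points and then verify the three properties. On each $I_i=(z_i,z_{i+1})$, with the convention $I_0=(0,z_1)$ and $I_N=(z_N,L)$ for the boundary intervals, let $\sigma_i$ be the sign of $f$ on $I_i$, let $m_i=|\int_{I_i} f|$, and $\ell_i=m_i/c_\infty$, so $\ell_i\leq |I_i|$ because $|f|\leq c_\infty$. Set $A_i:=F_f(z_i)$ and $B_i:=F_f(z_{i+1})$, with $F_f(x):=\int_0^x f(t)\,dt$ the CDF of the signed measure $f\,dx$. I define $g|_{I_i}$ as a step function with values in $\{\sigma_i c_\infty, 0\}$ by the following rule. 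If $A_i, B_i$ have the same sign (or one vanishes), place a single block of length $\ell_i$ adjacent to whichever endpoint has the larger $|F_f|$-value; on the boundary intervals this is automatically $z_1$ resp. $z_N$, since $F_f(0)=F_f(L)=0$. If $A_i, B_i$ have opposite signs, split the block into two pieces of lengths $|A_i|/c_\infty$ adjacent to $z_i$ and $|B_i|/c_\infty$ adjacent to $z_{i+1}$; these sum to $\ell_i$ because $|A_i|+|B_i|=|B_i-A_i|=m_i$ in this subcase. By construction $g\in X_s$, has the same sign and integral as $f$ on every $I_i$, and the $Z_2$-representatives in Definition~\ref{def:nodal} can be chosen so that $Z(g)=Z(f)$, establishing (i) and (ii).

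The heart of (iii) is the pointwise comparison
\[
|F_g(x)|\leq |F_f(x)|\qquad \forall x\in I, \qquad F_g(x):=\int_0^x g(t)\,dt.
\]
Both $F_g$ and $F_f$ agree at all nodal points and at $0,L$ by (ii), and both are $c_\infty$-Lipschitz. A short case analysis on the signs of $A_i,B_i$ shows that $F_g$ is exactly the extremal $c_\infty$-Lipschitz path from $A_i$ to $B_i$ of sign pattern $\sigma_i$ that minimizes $|F_g|$ pointwise: in the same-sign subcase it dwells at the endpoint value of smaller magnitude as long as possible before moving with slope $\pm c_\infty$ to the other; in the opposite-sign subcase it races linearly to zero, dwells there, then leaves linearly. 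In every subcase the bound $|F_g|\leq |F_f|$ follows from the shared boundary values and the slope bound on $F_f$. The case $p=1$ of (iii) is then immediate from the identity $W_1(\mu_+,\mu_-)=\int_I |F_{\mu_+}-F_{\mu_-}|\,dx=\int_I |F_f|\,dx$.

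The case $p>1$ is where I expect the main difficulty. I would pass to the quantile representation $W_p^p(\mu_+,\mu_-)=\int_0^M |F_{\mu_+}^{-1}(s)-F_{\mu_-}^{-1}(s)|^p\,ds$ with $M=c_1/2$, and prove the stronger pointwise bound $|F_{g_+}^{-1}(s)-F_{g_-}^{-1}(s)|\leq |F_{f_+}^{-1}(s)-F_{f_-}^{-1}(s)|$ for every $s\in[0,M]$. Two observations drive this: first, since $\int_{I_i}f=\int_{I_i}g$, the $s$-quantiles of the positive and negative parts lie in the same respective intervals $I_i, I_j$ for $f$ and for $g$; second, the amount $L_i$ of $f_+$-mass on $I_i$ transported leftward is determined by the fluxes $A_i, B_i$ alone and is thus preserved by $g$, and my construction places exactly $L_i/c_\infty$-worth of left-going block of $g_+$ adjacent to $z_i$ and $R_i/c_\infty=(m_i-L_i)/c_\infty$ of right-going block adjacent to $z_{i+1}$. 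A direct CDF comparison inside each $I_i$ then shows that any left-going quantile of $g_+$ is at or to the left of its $f_+$ counterpart, while the corresponding target on the negative interval is at or to the right of its $f_-$ counterpart, so the distance can only decrease; the right-going case is symmetric. This yields the quantile inequality and hence (iii) for all $p\geq 1$. Strict inequality when $f\notin X_s$ follows because then $f|_{I_i}\neq g|_{I_i}$ on some $I_i$ on a positive-measure set, strictly sharpening $|F_f|>|F_g|$ and the quantile bound, and hence the $W_p$ bound.
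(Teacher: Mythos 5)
Your proposal is essentially correct, but it takes a genuinely different route from the paper. The paper works with an \emph{arbitrary} transport plan $\gamma\in\Gamma(f_+,f_-)$ and modifies $f$ interval by interval, concentrating on each $I_j$ the left-bound mass $\lambda_j=\gamma(I_j\times(0,z_j))$ at density $c_\infty$ against $z_j$ and the right-bound mass $\varrho_j$ against $z_{j+1}$; the cost comparison is the qualitative observation that the induced new plan moves each piece of mass at least as short a distance. You instead \emph{prescribe} $g$ in closed form from the primitive $F_f$, making the dependence on the flux values $A_i=F_f(z_i)$, $B_i=F_f(z_{i+1})$ explicit, and then compare costs via $W_1=\int|F_f|$ (for $p=1$) and a pointwise quantile inequality (for $p>1$). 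The two constructions actually produce the same $g$ when the paper's $\gamma$ is the monotone map, since the left-going mass on $I_i$ under \eqref{eq:Tmono} is $\max(0,-A_i)$, which is exactly what your same-sign/opposite-sign case split encodes. What your route buys is concreteness and a clean scalar inequality $|F_g|\leq|F_f|$; what the paper's route buys, and what you lose, is portability: by never invoking monotonicity or the CDF representation \eqref{eq:wp_icdf}, the paper's argument transfers verbatim to general increasing costs $h(|x-y|)$ and to star graphs (Lemma~\ref{lem:steps_Sd}), where no CDF/quantile calculus is available. Two small points worth tightening in your write-up: the $p>1$ part is a plan rather than a proof --- the quantile comparison $F_{g_+}^{-1}(s)\le F_{f_+}^{-1}(s)$ (for left-going $s$) and $F_{g_-}^{-1}(s)\ge F_{f_-}^{-1}(s)$ deserves the short Lipschitz computation you allude to, carried out for both endpoints of each block; and the final sentence on strictness is compressed --- you should say explicitly that $f\neq g$ on a positive-measure set of some $I_i$ forces $F_{g_+}^{-1}(s)<F_{f_+}^{-1}(s)$ (or the analogous $g_-$ inequality) on a positive-measure set of quantiles $s$, which is true but not entirely immediate.
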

\begin{remark*}
Even though the optimal transport plan in this case is given by the monotonic map~\eqref{eq:Tmono}, we will work in this proof with a general coupling $\gamma \in \Gamma (f_+, f_-)$. This level of generality shows that at least the geometric nature of the problem extends to the following more general setting: let $\Omega$ be a simple curve, $c(x,y)=h(|x-y|)$ where $h:\R_+ \to \R_+$ is a monotonically increasing function in the geodesic distance for two points $x,y\in \Omega$, and define the $c$-transport cost as 
\begin{equation}\label{eq:Kc}
K_c (T) = \int_{\Omega \times \Omega} c(x,T(x)) \, , d\mu (x) \, .
\end{equation}
Then, defining the optimal transport with respect to $c(x,y)$ analogously to \eqref{eq:wasdef}, Lemma \ref{lem:steps} still holds. Also, it will allow us to generalize this statement immediately to star graphs in Section \ref{sec:trees}.

\end{remark*}
\begin{proof}
Consider an optimal transport plan between $f_+$ and $f_-$, i.e., a measure $\gamma \in \Gamma (f_+, f_-)$ such that~$K_p(\gamma) = W_p^p(f_+, f_-)$, see equation \eqref{eq:Pi}. The intuition is that for each $0\leq j \leq N$, some of the mass on $I_j$ has to be transported to the left by $\gamma$, and some has to be transported to the right, see Fig.\ \ref{fig:pre_opt_int}.\footnote{The red dotted line, separating between the left and right transported parts in $I_j$, is vertical in Fig.\ \ref{fig:pre_opt_int}, since the optimal transport in this case is given by a map. In the case when the optimal transport is given by a coupling, this separation would be better depicted by a curve $h(x)$ with $0\leq h(x)\leq f_+ (x)$.} It is therefore less costly to have those respective masses already concentrated near the endpoints of~$I_j$, see Fig.\ \ref{fig:post_opt_int}. 
\begin{figure}[h]
 \begin{subfigure}[t]{.48\textwidth}
            \caption{}
            \includegraphics[width=.95\linewidth]{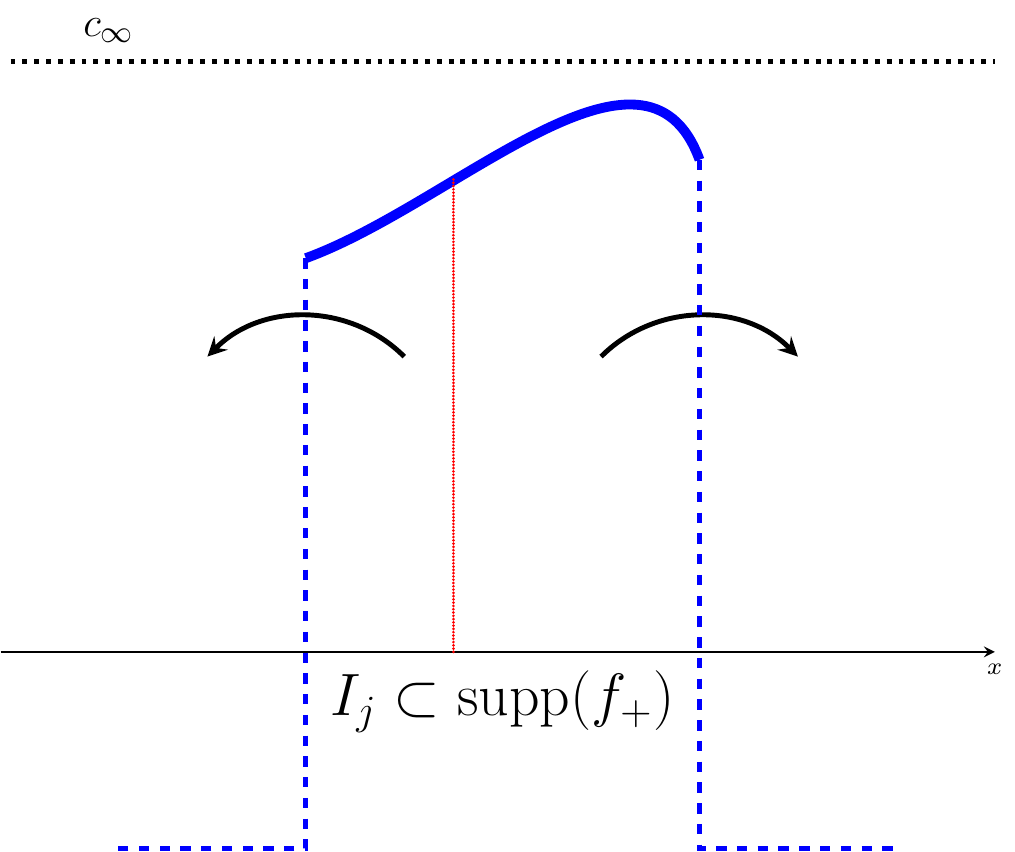}
            \label{fig:pre_opt_int}
        \end{subfigure}
        \begin{subfigure}[t]{ .48\textwidth}
            \caption{}
            \includegraphics[width=.95\linewidth]{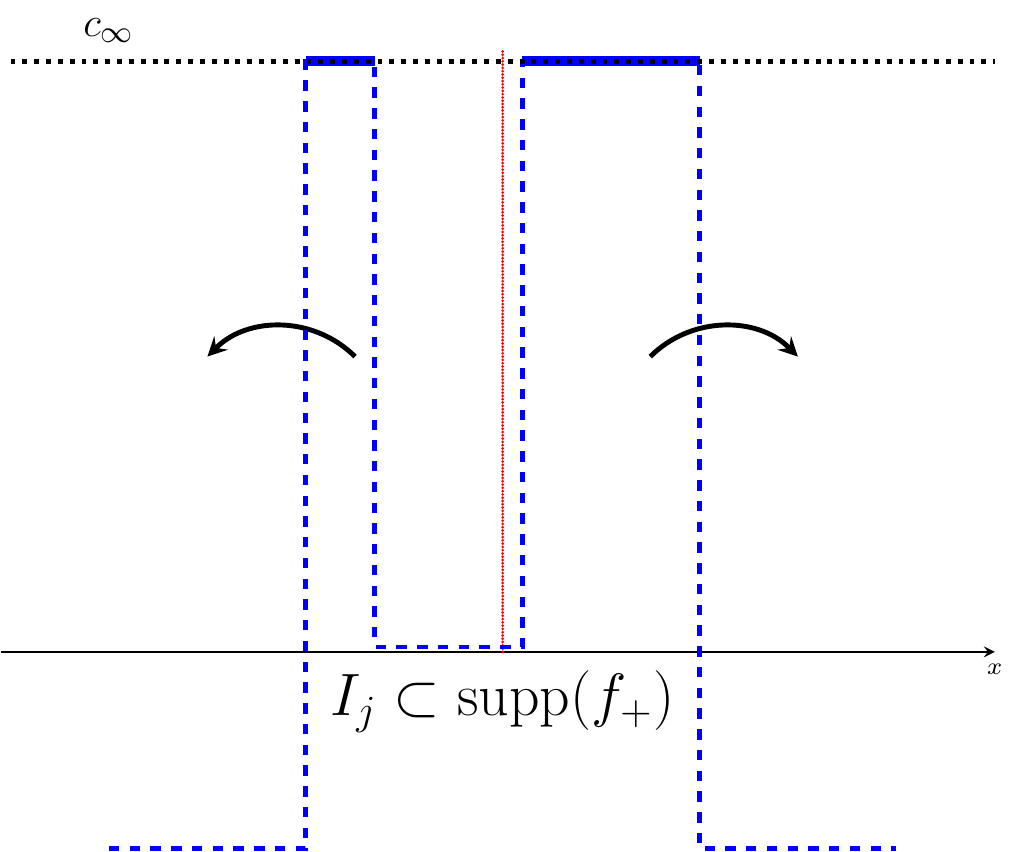}
            \label{fig:post_opt_int}
        \end{subfigure}
\caption{
{\bf (A)} For an interval $I_j$ where $f(x)\geq 0$ and a given transport plan $\gamma\in \Gamma (f_+, f_-)$, the mass
left of the dotted lines will be transported to intervals $I_{\ell}$ with $\ell <j$, and the mass to the right of the dotted to intervals $I_{\ell}$ with $\ell >j$. {\bf (B)} The cost of transport for the portrayed re-organization of $f$ on $I_j$ is cheaper.}
\label{fig:opt_int}
\end{figure}

To prove the above intuition rigorously, we will inductively define a sequence of functions in the following way: let $f^0 \equiv f$ and $\gamma^0 \equiv \gamma$. For $1\leq j \leq N$, we will define a function $f^j$ and a new transport map $\gamma^j \in \Gamma (f_{+}^j,f_{-}^j)$ such that they satisfy conditions (i--iii) of Lemma \ref{lem:steps} on the intervals $I_0, \ldots , I_{j-1}$ and such that $K_p(\gamma_{j-1}) \geq  K_p(\gamma_j)$ for any $p\geq 1$.

Since $f$ has a definite sign on each interval, suppose without loss of generality that $f$ is non-negative on $I_j$ (the negative case is completely symmetric). Define $p^j \equiv \gamma^j(I_j\times \cdot )$; this is a Borel measure on $I$ which specifies how much mass from $I_j$ is transported 
in $I$ under $\gamma^j$.

Since the functions $f_+$ and $f_-$ (thus the functions $f_{+}^j$ and $f_{-}^j$) have disjoint supports, $\gamma^j$ does not transports from $I^j$ into itself, and so $p^j (I_j) = \gamma^j (I_j\times I_j) = 0$. There exist two constants $\lambda_j, \varrho _j \geq 0$, such that {\rev (identifying $z_0=0$ and $z_{N+1}=L$)} $$ {\rev p^j} ((z_0, z_j)) = \lambda _j \, ,\qquad {\rev p^j} ((z_{j+1},z_{N+1})) = \varrho_j \, .$$  These are the masses transported from $I_j$ to its left and right, respectively. 

We construct a new function $f^{j+1}\in X$ with $Z(f^j)=Z(f^{j+1})$ as follows:\footnote{Since the nodal set $Z(f^j)$ is independent of $j$, we write $z_j$ unambiguously.}
$$f^{j+1}(x) \equiv {\rm sign}(f)(x) \, \cdot \, \left\{\begin{array}{ll}
c_{\infty}  \, , & x\in \left( z_j, z_j +\frac{\lambda_j}{c_{\infty}}\right) \, , \\
0  \, , & x\in \left( z_j +\frac{\lambda_j}{c_{\infty}}, z_{j+1}-\frac{\varrho_j}{c_{\infty}}\right) \, , \\
c_{\infty}  \, , & x\in \left( z_{j+1}-\frac{\varrho_j}{c_{\infty}}, z_j \right) \\
f^j (x) \, , & {\rm otherwise} \, .
\end{array} \right.  $$
Clearly $f^{j+1}$ satisfies conditions ({\it i})--({\it ii}) of Lemma \ref{lem:steps} on the interval $I_j$, and by induction on the intervals $I_0, \ldots , I_{j-1}$ as well. Since the mass transported from $I_j$ to the left is now concentrated on $(z_j, z_j + \lambda _j / c_{\infty})$ as much as possible (with density $=c_{\infty}$), one can transport this mass to the left at a lower cost, by definition \eqref{eq:wasdef}. The same holds for the transport out of $I_j$ to the right. The transport from any other positive interval is defined identically to $\gamma^j$. The resulting $\gamma^{j+1}$, the optimal transport plan between $f^{j+1}_+$ and $f^{j+1}_-$, satisfies $K_p(\gamma^{j})\geq K_p(\gamma^{j+1})$ for any $p\geq 1$. This is because, under the new optimal transport plan, some of the mass is transported over a shorter distance, and no mass is transported over a longer distance.

Moreover, note that if $f^{j+1} \neq f^j$, i.e., if the construction really did change the function (and so also $\gamma ^j \neq \gamma ^{j+1}$), then a nonzero mass is now transported over a shorter distance, and so a strict inequality holds $K_p (\gamma ^j) > K_p (\gamma ^{j+1})$.  

Finally, we set $g\equiv f^{N+1}$.
\end{proof}

Lemma \ref{lem:steps} implies that for any $p\geq 1$
$$\min\limits_{f\in X(c_{\infty},c_1,N,I)} W_p(f_+,f_-) \geq \min\limits_{f\in X_s(c_{\infty},c_1,N, I)} W_p(f_+,f_-)  \,.$$
And furthermore the minimum on the left hand side
is attained only on $X_s(c_{\infty},c_1,N, I)$. Hence, Question \ref{q:minim} reduces as follows:

\begin{corollary}
\label{cor:equiv}
For $I=(0,L)$, the two minimization problems 
$$\min\limits_{f\in X(c_{\infty},c_1,N,I)}  W_p(f_+,f_-)
\quad\text{and}\quad \min\limits_{f\in X_s(c_{\infty},c_1,N, I)} W_p(f_+,f_-) \, ,$$
have the same minimizers and the same minimum value for any $p\geq 1$.
\end{corollary}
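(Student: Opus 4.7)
The plan is to prove both the equality of minima and the coincidence of minimizers directly from Lemma \ref{lem:steps}, since that lemma already does essentially all the work. The two-way inequality between the minima is immediate, and the sharper "only $X_s$ achieves the minimum" statement uses the strict-inequality clause in part (iii) of the lemma.

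First I would observe the trivial inclusion $X_s(c_\infty,c_1,N,I) \subseteq X(c_\infty,c_1,N,I)$, which gives
\[
\min_{f\in X(c_\infty,c_1,N,I)} W_p(f_+,f_-) \;\leq\; \min_{f\in X_s(c_\infty,c_1,N,I)} W_p(f_+,f_-).
\]
For the reverse inequality, pick any $f \in X$ and apply Lemma \ref{lem:steps} to produce $g \in X_s$ with $W_p(f_+,f_-) \geq W_p(g_+,g_-)$. Taking the infimum over $f$ on the left and using that $g$ ranges over a subset of $X_s$ yields the reverse inequality, so the two minima coincide. (Both minima are in fact attained: the infimum on the right-hand side is over a finite-dimensional compact set once one fixes the nodal points and mass allocations, and this is exactly what Sections to come exploit.)

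Next I would handle the "same minimizers" claim. Suppose $f^\star \in X$ is a minimizer of $W_p(f_+,f_-)$ over $X$, and assume for contradiction that $f^\star \notin X_s$. Applying Lemma \ref{lem:steps} to $f^\star$ produces $g \in X_s$ with $W_p(f^\star_+,f^\star_-) > W_p(g_+,g_-)$, the inequality being strict precisely because $f^\star \notin X_s$. Since $g \in X_s \subseteq X$, this contradicts the minimality of $f^\star$ on $X$. Hence every minimizer of the left-hand problem lies in $X_s$, and clearly achieves the common minimum value, so it is also a minimizer of the right-hand problem. Conversely, any minimizer of the right-hand problem lies in $X_s\subseteq X$ and achieves the common minimum value, so it is also a minimizer of the left-hand problem.

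There is essentially no obstacle here; the content of the corollary is entirely absorbed by Lemma \ref{lem:steps}, and the proof is a short logical unpacking. The only point that deserves a brief comment is that the strict inequality in part (iii) of the lemma is indispensable for the "same minimizers" conclusion — without it one could only conclude equality of the minimum values, not that minimizers are forced into the step-function class $X_s$.
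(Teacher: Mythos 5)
Your proof is correct and takes exactly the approach the paper intends: the paper does not give a separate proof of the corollary, but states immediately before it that Lemma \ref{lem:steps} yields the one-sided inequality between the minima and that the strict-inequality clause forces minimizers into $X_s$; your write-up simply unpacks this. Your aside on attainment of the minimum is informal but harmless, since the explicit construction in Theorem \ref{thm:w1_min} later settles it.
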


Corollary \ref{cor:equiv} allows us consider the minimizers in Question 
\ref{q:minim} only from $X_s$, simply by shifting the mass without changing the nodal set itself. To further reduce the problem, we will now allow for {\rev non-local shifts:  we will move mass across sub-intervals,} which may also change the nodal set (but not its size). To ensure that these non-local shifts reduce the transport cost, we will make use of the monotonicity properties of \eqref{eq:Tmono}.
\begin{lemma}\label{lem:adj} 
Let $I=(0,L)$ and $f\in X_s(c_{\infty},c_1,N, I)$. There exists a function $g\in X_s$ such that $W_{p}(f_+, f_-) \geq W_{p} (g_+, g_-)$, with the following property: 
denoting the optimal transport map~\eqref{eq:Tmono} of~$g$ by $T=T[g]$, then $T$ only transport mass between adjacent intervals, i.e., $T(I_j) \subseteq I_{j-1}\cup I_{j+1}$ for every $I_j=(z_j, z_{j+1})$ where $\{z_1, \ldots ,z_N \}= Z(g)$, $z_0=0$, and $z_{N+1}= L$.
\end{lemma}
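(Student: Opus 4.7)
The plan is to construct $g$ from $f$ via a finite sequence of mass-swap operations (between intervals of the same sign), each of which weakly decreases $W_p$ and strictly reduces a measure of non-locality of the transport.

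\textbf{Step 1 (Characterization).} First I would characterize the local-transport property in terms of the antiderivative $F_f(x) := \int_0^x f(t)\,dt$. Since the optimal transport on the interval is the monotone map $T = F_{f_-}^{-1} \circ F_{f_+}$ from \eqref{eq:Tmono}, a direct computation with the cumulative distribution functions of $f_{\pm}$ shows that $T(I_j \cap \mathrm{supp}\,f_+) \subseteq I_{j-1} \cup I_{j+1}$ for every $j$ if and only if the piecewise-linear function $F_f$ oscillates about zero: $F_f(z_j) \geq 0$ at every local maximum (which necessarily occurs at some $z_j$) and $F_f(z_j) \leq 0$ at every local minimum. In particular, the left/right fluxes from each positive interval $I_{2k}$ can be computed explicitly as $-F_f(z_{2k})$ and $F_f(z_{2k+1})$, so requiring all fluxes to be nonnegative is exactly the oscillation condition.

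\textbf{Step 2 (Mass swap).} If $f$ already satisfies the oscillation condition, set $g = f$. Otherwise, fix a defective nodal point $z_j$; say it is a local minimum of $F_f$ with $F_f(z_j) > 0$ (the other case is symmetric). I would then transfer a small amount $\epsilon > 0$ of positive mass from a positive interval $I_a \subset (0, z_j)$ to a positive interval $I_b \subset (z_j, L)$, by shrinking the $c_\infty$-supported region of $f$ on $I_a$ by $\epsilon/c_\infty$ and extending it on $I_b$ by the same amount; minor shifts of intermediate nodal points may be required to keep $f^{(1)} \in X_s$ with the same count $N$. By the fundamental theorem of calculus, $F_{f^{(1)}} = F_f - \epsilon$ on the interval between the swap endpoints and $F_{f^{(1)}} = F_f$ outside; in particular $F_{f^{(1)}}(z_j) = F_f(z_j) - \epsilon$.

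\textbf{Step 3 (Cost monotonicity and iteration).} Next I would verify that $W_p(f^{(1)}_+, f^{(1)}_-) \leq W_p(f_+, f_-)$. For $p = 1$ this is immediate from the classical identity $W_1(f_+, f_-) = \|F_f\|_{L^1}$ together with the pointwise bound $|F_{f^{(1)}}| \leq |F_f|$ on the swap region (which is where $F_f > 0$, by the choice of $a,b$). For $p > 1$, I would compare the inverse CDFs via \eqref{eq:wp_icdf}: the swap relocates the positive mass that was traveling non-locally (``ahead'' of the negative mass, since $F_f(z_j) > 0$) to a point closer to its image under $T$, strictly shortening those transport distances while leaving the others unchanged. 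Iterating with the maximal feasible $\epsilon$ at each step, the total defect $\sum_j \max(-s_j F_f(z_j), 0)$ --- where $s_j \in \{\pm 1\}$ encodes whether $z_j$ is a local maximum or minimum of $F_f$ --- drops by a positive amount each time, so finitely many iterations produce the desired $g$.

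The main obstacle I anticipate is Step 3 for general $p \geq 1$: tracking the inverse CDFs under the swap and ruling out any compensating lengthenings of other transport distances require a careful monotone-rearrangement argument. Should this direct comparison become delicate, a cleaner fallback is a compactness argument: the parameter space describing $X_s$ with fixed $(c_\infty, c_1, N, I)$ is compact, so $W_p$ attains its minimum there, and the swap construction applied contrapositively forces any minimizer to satisfy the oscillation condition --- hence its optimal transport must be local, and any such minimizer can serve as $g$.
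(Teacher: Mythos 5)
Your Step 1 characterization of local transport in terms of the antiderivative $F_f(x)=\int_0^x f$ is correct and elegant: working out the CDF formulation does show that the monotone map \eqref{eq:Tmono} transports each $I_j$ only to $I_{j-1}\cup I_{j+1}$ exactly when $F_f(z_j)\ge0$ at every local maximum of $F_f$ (i.e., every $+\to-$ nodal point) and $F_f(z_j)\le0$ at every local minimum. This is a genuinely different reformulation from the paper's proof, which never introduces $F_f$ and instead performs an induction that shifts the \emph{targets} $T(E)$ of any non-local transport into the adjacent interval and pushes intermediate nodal points out of the way, relying on the monotonicity of \eqref{eq:Tmono} to rule out compensating cost increases. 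Your mass-swap is essentially a dual construction (moving source mass rather than target mass), and for $p=1$ the identity $W_1=\|F_f\|_{L^1}$ is a pleasant simplification.

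However, there are genuine gaps that your proposal does not close. First, the claim that the swap region lies inside $\{F_f>0\}$ ``by the choice of $a,b$'' is not automatic. Since $f\in X_s$, the only place you can shrink a $c_\infty$-step on $I_a$ without disconnecting it or dislodging it from a nodal point is from the end farthest from $z_{j}$; but $F_f$ can be negative at that far endpoint of the step even when $F_f(z_j)>0$. In that case the swap pushes $F_{f^{(1)}}$ further below zero on part of the swap region, the pointwise bound $|F_{f^{(1)}}|\le|F_f|$ fails, and the $W_1$ monotonicity argument breaks. Second, the ``minor shifts of intermediate nodal points'' needed to keep $f^{(1)}\in X_s$ with exactly $N$ nodal points are not spelled out, and a naive implementation (shrinking a step from the nodal-point side) leaves a step no longer adjacent to any point of $Z(f^{(1)})$, violating Definition \ref{def:Xs}; the paper handles this precisely by pushing nodal points to make room. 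Third, for $p>1$ the inverse-CDF argument requires showing that every rightward-shifted positive quantile does not overshoot its target $F_{f_-}^{-1}(t)$; you flag this but do not resolve it, whereas the paper closes the analogous gap via the strict monotonicity of $T$ in \eqref{eq:Tmono}. Fourth, the termination argument is not established: fixing one defective node perturbs $F_f$ at neighboring nodes, and it is not shown that the total defect is strictly decreasing. Finally, the compactness fallback is not airtight as stated: the parameter space for $X_s(c_\infty,c_1,N,I)$ is bounded but not closed, since degenerate limits ($l_i=r_i=0$ or $z_i\to z_{i+1}$) drop the nodal-point count below $N$, so the existence of a minimizer in $X_s$ with exactly $N$ nodal points requires a separate argument (one cannot appeal to Theorem \ref{thm:w1_min}, since that is downstream of this lemma).
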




\begin{proof}
Suppose without loss of generality that $f$ is nonnegative on $I_0= (z_0, z_1)$. 
By definition, $f$ is characterized by $3N$ nonnegative numbers, $\{z_i, l_i, r_i\}_{i=1}^N$, such that $f = (-1)^{i+1} c_{\infty}$ on $(z_i-l_i, z_i)$, $f= (-1)^i c_{\infty}$ on $(z_i, z_i+r_i)$, and $f=0$ everywhere else,\footnote{\rev Since $f\in X_s$, behaviors such as the interval $[F,G]$ in Figure \ref{fig:zero_nz} are excluded.} see Fig.\ \ref{fig:zlr}.
\begin{figure}[h]
\centering
{\includegraphics[scale=1]{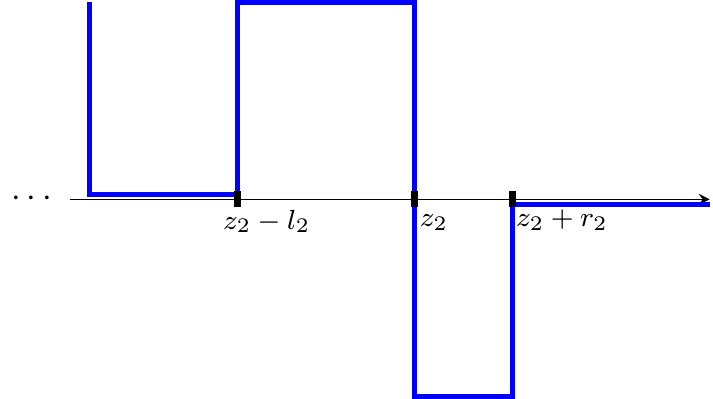}}
\caption{
A nodal point $z_2$ and the supports of $f_+$ and $f_-$ to its left and right, with $z_2-l_2$ and $z_2+r_2$ annotated. }
\label{fig:zlr}
\end{figure}

 We will again construct a sequence of functions $f^j\in X_s(c_{\infty},c_1,N)$, now characterized by $\{z_{i}^j, r_i^j, l_i^j\}_{i=1}^N$, such that
 \begin{enumerate}[label=(\roman*)]
 \item  $K_p(T^j)\geq K_p (T^{j+1})$, where $T^j$ is the monotone optimal transport map \eqref{eq:Tmono} associated with ${\rev W_p}(f^j_+, f^j_-)$.
 \item $T^{j+1}(I_{k})\subseteq I_{k+1}\cup I_{k-1}$ for $k=0,\ldots, j$  
 \end{enumerate}

 Set $f^0 = f$. Suppose without loss of generality that $f^j$ is nonnegative on $I_j$. By {\rev Lemma \ref{lem:steps},} the mass in $(z_j^j, z_j^j+r_j^j)$ is transported to the left, i.e., to $I_0, \ldots, I_{j-1}$. By the induction assumption, in the $j$-th step all of the mass in intervals with index less than $j$ is transported to adjacent intervals. Hence our inductive construction only needs to consider mass transported from $I_j$ {\em to the right.}
 
 Assume $T^j=T$ transports an interval $E \subseteq (z_{j+1}-l_{j+1}^j, z_{j+1}) \subseteq I_j$ to a non-adjacent sub-interval to the right of $I_j$, i.e., $T(E) \subseteq (z_{j+3}^j, L)$ but $T(E) \cap I_{j+1} = \emptyset$.\footnote{Since $f^j$ is non-negative on $I_j$, it is also non-negative on $I_{j+2}$, and therefore the next-nearest interval on which $f$ is non-positive is $I_{j+3}=(z_{j+3}, z_{j+4})$.}

For simplicity, suppose further that $E$ maps into a single interval, i.e., $T(E) \subseteq I_{i}$ with $i > j+2$; Otherwise, if $T$ only maps a part of $E$ into $I_{i}$, one can split $E$ into different sub-intervals each mapping into a distinct interval.

 To construct $f^{j+1}$, we would like to perform a {\bf shift operation}, that is, to {\em shift $T(E)$ into $I_{j+1}$}. To make this more precise, consider first the ideal situation where $f^j = 0$ on some subset $\tilde{E} \subseteq I_{j+1}$ of equal length, i.e., $|\tilde{E}|= |E|$. In loose terms, it means that there is space for $E$ to be shifted into $I_{j+1}$, and so we would set
 \begin{equation}\label{eq:shift}
 f^{j+1} (x) = \left\{
 \begin{array}{cc}
      -c_{\infty} \, ,& x\in \tilde{E} \, ,  \\
      0 \, ,&     x\in T(E) \\
      f^j  (x) \, , & {\rm otherwise} \, .
 \end{array} \right.
 \end{equation}
In particular, in this case $Z(f^j)=Z(f^{j+1}).$

The above construction, however, might no be possible; it might be that the interval $I_{j+1}$ is already full, by which we mean that if $(z_{j+1}^j+r_{j+1}^j, z_{j+1}^j-l_{j+2}^j)\subset I_{j+1}$, the sub-interval on which $f=0$ is shorter than $T(E)$. Then the mass of $T(E)$ cannot be shifted there (and $f^{j+1}$ cannot be defined as in \eqref{eq:shift}), since $f$ is a step function of height $\pm c_{\infty}$, and so cannot exceed this value and stay in $X_s (c_{\infty},c_1,N)$. If this is the case, we need to {\em push some or all of the points} $z_{j+2}^j, \ldots, z_{i}^j$ to the right by up to $|E|$, and then we can repeat the above construction as in \eqref{eq:shift}, with the shifted intervals and nodal points.

The shift operation is depicted on Figure~\ref{fig:shift}.
Let us consider the effect of the shift operation on the overall transport cost $W_p (f_+^{j+1}, f_-^{j+1}$):
\begin{itemize}
    \item The mass $|E|c_{\infty}$, which was previously transported between from $E\subseteq I_j$ to $T^j(E)\subseteq I_i$, is now transported over as shorter distance, to $T^{j+1}(E)\subseteq I_{j+1}$.
    \item Suppose that for $\ell>j$, the nodal endpoint $z_{\ell}^j$ was pushed to the right. By the inductive construction, the transport from/to $I_{\ell}$ could not have been from a point {\em to the left} of $z_j$. If it was transported to/from a point {\em to the right} of $I_i$, then the overall transport distance decreased.
    \item Suppose that for $\ell>j$, the nodal endpoint $z_{\ell}^j$ was pushed to the right. Suppose without loss of generality that $I_{\ell} \subseteq {\rm supp}(f_+)$ (the negative case is analogous) and that for some $D\subseteq I_{\ell}$, we have $T^j(D)\subseteq I_m \subset {\rm supp}(f_-)$ with $\ell > m >j$; the proposed shift would then {\em increase} the transport distance, by potentially pushing $D$ away from $T(D)$. This scenario, however, is {\em impossible} due to the monotonicity of $T^j$, see \eqref{eq:Tmono}: take two points $x\in E$ and $x'\in D$, then $x<x'$ but we $T(x)>T(x')$ (since $i>\ell)$, hence a contradiction.
    
\end{itemize} 

We note here that our construction cannot change the order of points, i.e., $z_k^{j+1}\leq z_{k+1}^{j+1}$. Similarly, the construction keeps the nodal points inside $I$, i.e., $z_1^{j+1} \geq 0$ and $z_N^{j+1}\leq L$. 

\begin{figure}[h]
\centering
{\includegraphics[scale=1]{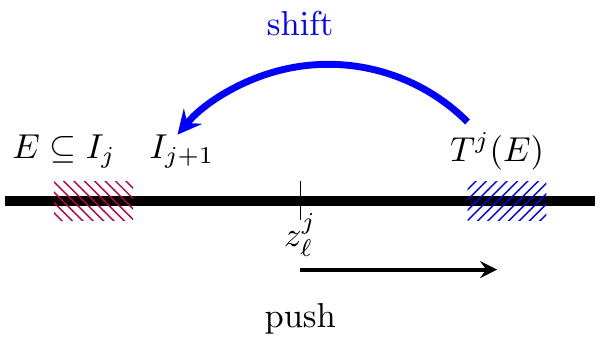}}
\caption{
The shift operation described in the proof of Lemma \ref{lem:adj}: on the $j$-th step, some $E\subseteq I_j$ is mapped by $T^j$ to a non-adjacent interval $I_i$ right of $E$. We shift that mass to $I_{j+1}$. In the process, some or all of the nodal points in between, e.g., $z_{\ell}^j$ with $j+1<\ell < i$, need to be pushed {\em away} (to the right) to ``make room'' for the shifted mass.}
\label{fig:shift}
\end{figure}

We established the first property in the induction, that $K_p(T^j)\geq K_p (T^{j+1})$. Now, note that the new transport map we constructed is monotone and pushes $f_+^{j+1}$ to $f_- ^{j+1}$. Hence, it is an optimal transport map between $f_+^{j+1}$ and $f_- ^{j+1}$ (the unique one for $p>1$), $T^{j+1}$. By construction, it satisfies the adjacency requirement, that $T^{j+1}(I_k) \subseteq I_{k+1}\cup I_{k-1}$ for all $k\leq j$.

This completes the $j$-th step. We take $g=f^{N}$, which completes the construction. Finally, note that unless $f=g$, we strictly reduced the transport cost at some stage of the induction, and so $W_p(f_+,f_-)>W_p (g_+,g_-)$.
\end{proof}

\begin{remark*}
The proof of Lemma \ref{lem:adj} relies on the convexity of the cost function $h(|x-y|)=|x-y|^p$ with $p\in [1,\infty)$. When $h$ is convex, the monotone map \eqref{eq:Tmono} is a solution of Monge's problem \eqref{eq:monge}, see \cite{santa2015optimal}. This is no longer the case if one considers a concave cost function, e.g., $|x-y|^p$ with $p\in (0,1)$. For concave costs, it is known that the maps may not be monotone \cite{gangbo1996geometry, mccann1999exact}, and a different type of analysis will be needed. Another interesting cost function not considered in this work is the $L^{\infty}$ cost \cite{barron2017duality, champion2008wasserstein}.
\end{remark*}

We conclude that $f\in X_s(c_{\infty}, c_1, N, I)$ can be a minimizer of the problem \eqref{eq:minprob}
for $p\geq 1$ if and only if it has the following structure:
\begin{enumerate}[label=(\roman*)]
\item $Z(f) = \{z_1, \ldots, z_N\} \subset (0,L)$.
\item There exist $d_1, \ldots,  d_N > 0$ such that $z_j + d_j \leq z_{j+1}-d_{j+1} $ for all $j=1,\ldots , N$, $z_1-d_1 \geq 0$, and $z_N +d_N \leq L$.
\item without loss of generality, assume that $f \geq 0$ on $(z_0, z_1)$. Then for $j$ odd, $g(x)= c_{\infty}$ on $(z_j-d_j, z_j)$ and $g(x)=-c_{\infty}$ on $(z_j, z_j+d_j)$, and vice versa for $j$ even.
\end{enumerate} 
In less formal language, a
minimizing step-function attains its maximal value $c_{\infty}$ anti-symmetrically around each nodal point $z_j$, see Figure \ref{fig:steps_opt}. The optimal transport plan $\gamma\in \Gamma (f_+, f_-)$, which is given by the monotone map \eqref{eq:Tmono}, only transports across each nodal point, and $K_p(\gamma)$ is just the sum of costs accrued at each nodal point. The only questions that now remain concern the distribution of the width parameters $d_1, \ldots d_N$, and the overall minimal optimal transport cost.
\begin{theorem}\label{thm:w1_min}
Let $I=(0,L)$ with $L>0$, $p\geq 1$, $c_{\infty}>0$, $c_1\in (0, Lc_{\infty}]$, and $N\in \mathbb{N}_{+}$.
The minimizers of the problem \eqref{eq:minprob} are step functions, anti-symmetric about each nodal point, with value $\pm c_{\infty}$ and width $c_1/(2c_{\infty}N)$. These minimizers satisfy
\begin{equation}
\min\limits_{f \in X(c_{\infty},c_1,N, I)} W_p(f_+, f_-)= 2^{-\frac{p+1}{p}} \frac{c_1}{Nc_{\infty}}  c_1^{\frac{1}{p}} \, .
\end{equation}
\end{theorem}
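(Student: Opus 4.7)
The plan is to build directly on the structural characterization stated just before the theorem: after invoking Lemma~\ref{lem:steps} and Lemma~\ref{lem:adj}, minimizers are step functions with value $\pm c_\infty$ on intervals of equal width $d_j$ on both sides of each nodal point $z_j$, and the monotone optimal transport map \eqref{eq:Tmono} couples only across one nodal point at a time. So the infinite-dimensional variational problem \eqref{eq:minprob} reduces to the finite-dimensional problem of choosing the widths $d_1,\ldots,d_N \geq 0$ (and the positions $z_1,\ldots,z_N$, which decouple from the cost).

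The first concrete step is to rewrite the constraints in this parametrization. The mean-zero condition is automatic by the local antisymmetry at each $z_j$, and the $L^1$ constraint becomes
\[
  \|f\|_1 \;=\; 2 c_\infty \sum_{j=1}^N d_j \;=\; c_1,
  \qquad\text{i.e.,}\qquad \sum_{j=1}^N d_j = \frac{c_1}{2 c_\infty}.
\]
The admissibility constraint $z_j+d_j \le z_{j+1}-d_{j+1}$ with endpoints in $(0,L)$ requires total support length $\le L$, which reduces to $c_1/c_\infty \le L$, exactly the hypothesis $c_1 \in (0,c_\infty L]$.

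Next I would compute the transport cost at each nodal point. At $z_j$, the positive and negative sides both have constant density $c_\infty$ on intervals of the same length $d_j$, so the monotone map $T$ from \eqref{eq:Tmono} is the rigid translation $x \mapsto x+d_j$ on $(z_j-d_j, z_j)$. Hence the $p$-cost accumulated at $z_j$ is
\[
  \int_{z_j-d_j}^{z_j} d_j^{\,p}\, c_\infty \,\dd x \;=\; c_\infty\, d_j^{\,p+1},
\]
and summing over the nodal points, using that $T$ only transports across adjacent intervals, gives
\[
  K_p\bigl(T\bigr) \;=\; c_\infty \sum_{j=1}^N d_j^{\,p+1}.
\]

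The final step is a one-line convexity argument: for $p\geq 1$ the map $t\mapsto t^{p+1}$ is convex, so by Jensen's inequality, under the constraint $\sum_j d_j = c_1/(2c_\infty)$ the sum $\sum_j d_j^{p+1}$ is minimized when all $d_j$ are equal to $c_1/(2c_\infty N)$, giving $\sum_j d_j^{p+1} = c_1^{p+1}/\bigl(2^{p+1} c_\infty^{p+1} N^{p}\bigr)$. Substituting and taking $p$-th roots yields
\[
  W_p(f_+,f_-) \;=\; K_p^{1/p} \;=\; 2^{-(p+1)/p}\,\frac{c_1^{\,1+1/p}}{c_\infty N},
\]
matching the stated value. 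There is no substantial obstacle here once the two reduction lemmas have been established; the only thing worth flagging is the verification that the minimizer of the finite-dimensional problem is feasible (which follows from $c_1 \le c_\infty L$) and that the minimum is attained only by the uniform-width configuration (which follows from the strict convexity of $t\mapsto t^{p+1}$ for $p>1$, and from Lemmas~\ref{lem:steps}--\ref{lem:adj} identifying the optimizers with step functions in $X_s$ for $p\ge 1$).
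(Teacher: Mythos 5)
Your proposal is correct and follows essentially the same route as the paper: invoke Lemmas~\ref{lem:steps} and~\ref{lem:adj} to reduce to step functions in $X_s$ with adjacent-interval transport, compute the per-node cost $c_\infty d_j^{p+1}$, impose the mass constraint $\sum_j d_j = c_1/(2c_\infty)$, and optimize over the widths. The one stylistic difference is in the final finite-dimensional step: the paper runs Lagrange multipliers, while you invoke Jensen's inequality for the convex map $t\mapsto t^{p+1}$. Your version is a bit cleaner and hands you uniqueness of the uniform-width configuration for free. (Minor slip: $t\mapsto t^{p+1}$ is \emph{strictly} convex for all $p>0$, not only for $p>1$, so the uniqueness of the equal-widths solution holds for every $p\geq 1$; the distinction $p=1$ versus $p>1$ only matters for the uniqueness of the optimal \emph{transport map}, not for this convexity argument.) Your direct observation that the monotone map at each node is the rigid translation $x\mapsto x+d_j$ is equivalent to the paper's inverse-CDF computation in \eqref{eq:singleBump} and is perhaps more transparent.
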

\begin{proof}
By \eqref{eq:wp_icdf}, the transport cost across a single nodal point $z_j$ depends on the inverse CDFs.\footnote{Since $F_{\mu}(y) = \mu( -\infty, y)$ might not be bijective, we define
{\em the inverse CDF by}
$F^{-1}(x)\equiv \inf \{y \in \R ~~ |~~ F(y) \geq y \}$, see \cite[Section 2.1]{santa2015optimal}.} {\rev By Lemmas \ref{lem:steps} and \ref{lem:adj}, it suffices to consider step functions $f\in X_s$ where mass is transported only to adjacent sub-intervals.}  Suppose without loss of generality that $f=c_{\infty}$ on $(z_1-d_1,z_1)$ and $f=-c_{\infty}$ on $(z_1, z_1+d_1)$. By definition, $F_+(z_1)=F_-(z_1+d_1) = c_{\infty}d_1$. Hence $F_{f_+}^{-1}(c_{\infty}d_1)-F_{f_-}^{-1}(c_{\infty}d_1) = d_1$. On the interval (of cumulative probabilities) $(0, c_{\infty}d_1)$, the inverse CDFs are linear with slope $1/c_{\infty}$, and so for every $t\in (0, c_{\infty}d_1) $ the difference between the inverse CDFs is constant, i.e., $d_1$. Hence, 
\begin{equation}\label{eq:singleBump}
\left[\int\limits_{0}^{c_{\infty}d_1} |F_{f_+}^{-1} (t) -F_{f_-}^{-1} (t)|^p \, dt\right]^{\frac{1}{p}}  = [c_{\infty}d_1^{p+1}]^{\frac{1}{p}} = c_{\infty}^{\frac{1}{p}}d_1^{1+\frac{1}{p}} \, . 
\end{equation}
Summing up the contributions of all nodal points, then by \eqref{eq:wp_icdf} we have that 
\begin{subequations}
\begin{equation}\label{eq:wp_djs}
W_p^p(f_+, f_-) = \sum\limits_{j=1}^{N} c_{\infty}d_j^{p+1} \, .
\end{equation}
For simplicity of computations, we will minimize $W_p^p (f_+ , f_-)$ (which is equivalent to minimizing $W_p(f_+,f_-)$). This is a constrained minimization problem, under the $L^1$ constraint
\begin{equation}
    \sum\limits_{j=1}^N d_j = \frac{c_1}{2c_{\infty}} \, .
\end{equation}
\end{subequations}
By the method of Lagrange multipliers, let $$\mathcal{L} = c_{\infty} \sum_j d_j^{p+1} - \lambda (\sum_j d_j - c_1/2c_{\infty}) \, .$$
We 
get from the condition $\partial_{d_j}\mathcal{L}=0$ that
\begin{equation}
\lambda = (p+1)c_{\infty} d_j^p  \, , \qquad 1\leq j \leq N\, .
\end{equation}
The condition $\partial_{\lambda}\mathcal{L} =0$ yields
$$\frac{c_1}{2c_{\infty}} = \sum\limits_{j=1}^N d_j = N\left[\frac{\lambda}{(p+1)c_{\infty}}\right]^{\frac{1}{p}},$$
which leads to,
$$\lambda = (p+1)c_{\infty} \left( \frac{c_1}{2Nc_{\infty}} \right)^p\, .$$
Combining the above expressions of $\lambda$ together, we get
$d_j = c_1/2Nc_{\infty}$, and the overall cost is, by~\eqref{eq:wp_djs},
$$
W_p^p (f_+,f_-) = Nc_{\infty}\left(\frac{c_1}{2Nc_{\infty}}\right)^{p+1} = \frac{1}{2^{p+1}} \left(\frac{c_1}{Nc_{\infty}} \right)^p c_1 \, .
$$
\end{proof}

\noindent
As a consequence, we have
\begin{corollary}
\label{cor:inequalpgeneral}
Let $I=(0,L)$ and $p\geq 1$.
For any $f \in L^\infty(I)$
we have
\begin{equation}
\label{eq:uncertnew}
   W_p(f_+, f_-)  \cdot |Z(f)|\geq 2^{-1-\frac1p} \left(\frac{\|f\|_1}{\|f\|_{\infty}} \right) \|f\|_1^{\frac1p} \, .
\end{equation}
This inequality is sharp, where equality holds if and only if $f$ is a minimizer of \eqref{eq:minprob}  as described in Theorem \ref{thm:w1_min}.
\end{corollary}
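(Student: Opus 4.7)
The plan is to observe that Corollary \ref{cor:inequalpgeneral} is an immediate rewording of Theorem \ref{thm:w1_min}, and the entire argument is essentially bookkeeping once the theorem is in hand.

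First I would reduce to the case where $\int_I f\,dx = 0$, $f \not\equiv 0$, and $|Z(f)|$ is finite. If $\int_I f \neq 0$ the quantity $W_p(f_+, f_-)$ is not defined as a Wasserstein distance between measures of equal mass, so the natural reading restricts to zero-mean $f$ (this should be flagged in the proof, since the statement as written says only $f \in L^\infty(I)$). If $|Z(f)| = \infty$ the inequality is trivial, and if $f \equiv 0$ both sides vanish. In the remaining case, $f$ belongs by definition to the class $X(\|f\|_\infty, \|f\|_1, |Z(f)|, I)$.

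Next, applying Theorem \ref{thm:w1_min} with $c_\infty = \|f\|_\infty$, $c_1 = \|f\|_1$, and $N = |Z(f)|$ gives
\[
W_p(f_+, f_-) \;\geq\; 2^{-\frac{p+1}{p}}\, \frac{\|f\|_1}{|Z(f)|\,\|f\|_\infty}\, \|f\|_1^{1/p}.
\]
Multiplying both sides by $|Z(f)|$ and using the identity $\frac{p+1}{p} = 1 + \frac{1}{p}$ produces \eqref{eq:uncertnew} exactly. The equality case transfers directly: Theorem \ref{thm:w1_min} identifies the minimizers of \eqref{eq:minprob} as the anti-symmetric step functions whose bumps around each nodal point have height $c_\infty$ and common width $c_1/(2Nc_\infty)$, so equality in \eqref{eq:uncertnew} holds if and only if $f$ is such a function with the parameters determined by its own norms and nodal-set size.

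Since every technical step — the reduction to step functions in Lemma \ref{lem:steps}, the adjacency reduction for the optimal map in Lemma \ref{lem:adj}, and the constrained Lagrange-multiplier computation inside Theorem \ref{thm:w1_min} — has already been carried out, there is no substantive obstacle left. The only real decision is how carefully to articulate the zero-mean assumption and the edge cases above; the rest is algebraic rearrangement.
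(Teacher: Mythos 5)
Your proof is correct and follows exactly the paper's intended route: Corollary~\ref{cor:inequalpgeneral} is stated in the paper as an immediate consequence of Theorem~\ref{thm:w1_min} with no separate argument given, and your write-up supplies precisely the bookkeeping that is meant to be read off (instantiating $c_\infty=\|f\|_\infty$, $c_1=\|f\|_1$, $N=|Z(f)|$, multiplying by $|Z(f)|$, and transferring the equality case). Your remark that the statement implicitly requires $\int_I f=0$ for $W_p(f_+,f_-)$ to make sense is a fair and useful observation that the paper leaves tacit.
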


We thus see that for the case of $I=(0,L)$, we have a sharp  inequality \eqref{eq:uncertnew} in a multiplicative form. The inequality shows no dependence on the length of $L$, which can also be seen from the scaling properties of the quantities involves.
Moreover, we are able to characterize an explicit constant factor which is also the best possible. Our result establishes that for $p=1$, the inequality in \cite[Prop.\ 3.1]{cavalletti2021indeterminacy} is \eqref{eq:uncertnew}, and therefore it is indeed sharp. For $p\geq 1$, the scaling in \cite[Corollary 3.3]{deponti2022indeterminacy} of the lower bound is the same as here, $2^{-1-1/p}$, but the overall constant is lower since it is proven in more general settings. We may attribute the proof of the inequality \eqref{eq:uncertnew} not only to properties of the special geometry in one dimension, but also the way the minimization problem~\eqref{eq:minprob} is posed; it is the solution to the latter that leads to, as a by-product, the most natural ''uncertainty principle'' in a multiplicative form.

\section{The Circle}\label{sec:circle}
We now consider the minimizers of $W_1(f_+, f_-)$ in $X= X(c_{\infty}, c_1, N, I)$ for the case where $I$ is a circle. As we see from the earlier discussion on the case of an interval, by scaling of $c_1$ we can restrict our attention to the unit circle $I=S^1$. The geodesic distance between two points $e^{it},e^{is}\in S^1$ is defined by 
$$d(e^{it} ,e^{is}) = \min\{ (t-s)~ {\rm mod} (2\pi), (s-t)~ {\rm mod}(2\pi) \} \,.$$
For every $z_j \in Z(f)$ we denote $z_j =e^{is_j}$ such that, without loss of generality $$0= s_1 <s_2 < \cdots <s_N <  s_{N+1} =2\pi \, ,$$ i.e., the nodal points are ordered from the $x$-axis in a counterclockwise direction. 

In this section we show that, on a circle, the minimizers and minimal value of \eqref{eq:minprob} are analogous to those on the interval (Theorem \ref{thm:w1_min} and Corollary \ref{cor:inequalpgeneral}):
\begin{theorem}\label{thm:wpcir}
Let $p \geq  1$, $c_{\infty}>0$, $c_1\in (0, 2 c_{\infty} \pi]$, and $N\in \mathbb{N}_{+}$. The minimizers of $W_p(f_+, f_-)$ over  $X(c_{\infty},c_1,N,S^1)$ are step functions in $X_s(c_{\infty},c_1,N,S^1)$, anti-symmetric about each nodal point, with value $\pm c_{\infty}$ and width $c_1/2c_{\infty}N$. Hence
$$\min\limits_{f \in X(c_{\infty},c_1,N, S^1)}  W_p(f_+, f_-)= 2^{-1-\frac{1}{p}} \frac{c_1}{Nc_{\infty}}  c_1^{\frac{1}{p}} \, .
$$
\end{theorem}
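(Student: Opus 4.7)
The strategy will parallel the interval case via a chain of reductions—first to step functions, then to local (adjacent) transports, and finally to a finite-dimensional constrained optimization—while addressing one new structural obstacle introduced by the circle.

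\textbf{Reduction to $X_s$.} My first step is to apply Lemma \ref{lem:steps}, whose remark explicitly extends its conclusion to any simple curve. The rearrangement within each arc $I_j$ between consecutive nodal points is a purely local operation on the positive/negative sub-intervals, and nothing in that proof uses the linear ordering of the interval. Hence it suffices to consider $f\in X_s(c_{\infty},c_1,N,S^1)$. Note also that since $f$ has zero mean and sign changes must alternate going around the loop, $N$ is automatically even, so the combinatorial structure is well-posed.

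\textbf{The new obstacle and a cut point.} The real challenge is that Lemma \ref{lem:adj}, which restricted transport to adjacent intervals on $(0,L)$, relied on the global monotonicity of the optimal Monge map \eqref{eq:Tmono}. On $S^1$ no such monotonicity exists: mass can in principle travel either clockwise or counterclockwise. My plan is to identify a ``cut point'' $s^\ast\in S^1$ through which no mass crosses in some optimal plan, thereby reducing the problem to an interval. Concretely, I would fix an arbitrary basepoint, define $\Phi(s)=F_{f_+}(s)-F_{f_-}(s)$, and observe that $\Phi$ is continuous and $2\pi$-periodic since $\int_{S^1}f=0$; cutting at a point where $\Phi$ attains its extremum forces the net signed flux through $s^\ast$ to vanish for the natural coupling obtained by unrolling.

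\textbf{Unrolling and invoking Theorem \ref{thm:w1_min}.} Once $s^\ast$ is fixed, I unroll $S^1$ to the interval $[s^\ast,s^\ast+2\pi]$; the function $f$ becomes an element of $X(c_{\infty},c_1,N,(0,2\pi))$ with the same nodal data, and the optimal transport on $S^1$ equals the optimal transport on this interval. Then Lemma \ref{lem:adj} and Theorem \ref{thm:w1_min} give that the minimum equals
\[
\min W_p(f_+,f_-) \;=\; 2^{-1-\tfrac1p}\,\frac{c_1}{Nc_{\infty}}\,c_1^{\tfrac1p},
\]
attained by anti-symmetric step functions of width $c_1/(2Nc_{\infty})$ about each nodal point. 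Since that formula is independent of the interval length, the value on $S^1$ matches. Conversely, any such configuration placed on $S^1$ is admissible and transports locally, achieving the same cost.

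\textbf{The expected difficulty.} The hardest step will be rigorously establishing the existence of a non-crossing cut point $s^\ast$ for general $p\geq 1$. For $p=1$, this is standard: the classical identity $W_1^{S^1}(\mu,\nu)=\min_{\alpha}\|F_{\mu}-F_{\nu}+\alpha\|_{L^1}$ makes the optimal $\alpha$ play precisely the role of the cut. For $p>1$ I would argue by a rerouting principle: any transport that traverses more than $\pi$ in one direction can be redirected the opposite way around the circle at strictly smaller $L^p$ cost by convexity of $t\mapsto t^p$, so optimal plans have transport distance at most $\pi$, which in turn forces the existence of at least one point across which no mass flows. A subsidiary subtlety is to check that the unrolled optimal coupling remains admissible (no atoms created at $s^\ast$); this is ensured because the $X_s$ minimizers place no mass in a neighborhood of $s^\ast$ provided $s^\ast$ is chosen in the interior of a zero-region, which can always be arranged by the freedom in selecting the extremizer of $\Phi$.
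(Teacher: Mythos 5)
Your reduction to $X_s$ via Lemma \ref{lem:steps} is fine and matches the paper's first step. Your high-level strategy after that — identify a point $s^\ast$ across which no mass flows under some optimal plan, cut there, and invoke Theorem \ref{thm:w1_min} on the resulting interval — is essentially the Delon–Salomon–Sobolevskii route that the paper's closing remark in Section \ref{sec:circle} explicitly acknowledges as an alternative but declines to develop. This could in principle work, and for $p=1$ the $\min_\alpha\|F_\mu-F_\nu+\alpha\|_{L^1}$ identity does supply a cut. However, the justification you give for cut-point existence when $p>1$ has a real gap, in two places.

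First, the implication ``optimal plans have transport distance at most $\pi$, which in turn forces the existence of at least one point across which no mass flows'' is a non sequitur. Bounded transport distance does not prevent every point from being crossed: a plan in which all mass moves a uniform $\pi/2$ counterclockwise has all distances $\leq \pi$ yet leaves no cut point. Ruling this out requires a structural statement about optimal plans on $S^1$, which is exactly what the paper's Lemma \ref{lem:circle_mono} provides (partitioning $S^1$ into arcs of consistent transport direction via McCann's exponential-map characterization for $p>1$ and the transport-ray decomposition for $p=1$), together with a separate argument — the ``lagging'' rearrangement — showing that when the whole circle is a single arc of one-directional transport with skips, a strictly cheaper competitor $g\in X_s$ can be built by re-ordering the arcs. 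You would need to supply this second argument (or a genuine cascade/flux-zero argument that uses direction-consistency), not merely the bound on transport distance.

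Second, your statement that cutting at an extremum of $\Phi=F_{f_+}-F_{f_-}$ ``forces the net signed flux through $s^\ast$ to vanish'' conflates net flux with gross flux. Zero net flux at $s^\ast$ allows mass to cross $s^\ast$ in both directions in equal amounts, in which case the circle cost can be strictly smaller than the unrolled interval cost, and the inequality you need — $W_p^{S^1}(f_+,f_-) \geq W_p^{[s^\ast,s^\ast+2\pi]}(\tilde f_+,\tilde f_-)$ — points the wrong way. To make the unrolling argument bite, you must show the \emph{gross} flux vanishes at $s^\ast$ under an actual optimal plan for $W_p^{S^1}$, and that is precisely the content of the paper's direction-consistency lemma plus the lagging analysis, or of the Delon et al.\ lifted-CDF monotonicity that you allude to but do not develop.

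In short: the geometric idea is sound and genuinely different in flavor from the paper's arc-by-arc reduction, but the step ``a cut point always exists for an optimal plan'' is the crux, and your proposed proof of it does not hold. Either adopt the paper's Lemma \ref{lem:circle_mono} and lagging argument, or carry out the lifted-periodic-measure machinery of \cite{delon2010fast} in detail; as written, the proposal leaves the hardest step unproved.
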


\begin{proof}

Throughout this proof, we use the existence of an optimal transport {\em map}; for all orders $p\geq 1$ there exists such a map $T:S^1 \to S^1$, as established for $p>1$ in see \cite{mccann2001polar} or \cite[Theorem 2.47]{villani2003topics}, and for $p=1$ in \cite{feldman2002monge} (see also \cite[Section 3.1]{santa2015optimal} and \cite{caffarelli2002constructing, bianchini2013monge} for general metric settings). A more elaborate analysis of optimal transport maps on the circle appears in \cite{delon2010fast}.


First, we note that the proof of Lemma \ref{lem:steps} carries to the circle without change: it is an iterative process done on $f$ in each interval (arch) $\{e^{is} ~ |~ s\in (s_j, s_{j+1})\}_{j=1}^{N}$. Hence, we can restrict our attention to solving the optimal-transport minimization problem on $X_s (c_{\infty} , c_1, N, S^1)$, see Definition~\ref{def:Xs}. 


Next, we turn to extend Lemma \ref{lem:adj} to the circle, i.e., to show that a function $f\in X_s$ has a cheaper transport cost if the optimal transport map associated with it only transports mass between adjacent arcs. Here lies the main new challenge: we cannot simply implement our inductive ``shifting'' strategy from Lemma \ref{lem:adj}, since there are no end-points to the circle, and hence no natural candidate arc $I_j$ to start the induction from. To this end, we prove the following lemma: 

\begin{lemma}\label{lem:circle_mono}
Let $f\in X_s$ and let $T$ be the optimal transport map associated with $W_p (f_+, f_-)$ for a fixed $p \geq 1$. Then, there exists a partition $S^1 = J_1 \cup \cdots \cup J_{K}$ into disjoint arcs such that for each arc $J_k$, either
 \begin{enumerate}
     \item All points $x\in J_k \cap {\rm supp}(f_+)$ are transported clockwise to $T(x) \in J_k$, {\em or}
     \item all points $x\in J_k \cap {\rm supp}(f_+)$ are transported counterclockwise to $T(x) \in J_k$, {\em or}
     \item $f= 0 $ on  $J_k$.
     \end{enumerate}
\end{lemma}
{\rev To prove Lemma \ref{lem:circle_mono}, we will rely on the existence of an optimal transport {\em map} (in the sense of Monge). For $p>1$, recall the following theorem due to McCann \cite{mccann2001polar} (for the Euclidean case, see \cite{brenier1991polar}), presented here in a simplified form:
\begin{theorem*}[McCann \cite{mccann2001polar}]
Let $\Omega$ be a $C^3$ connected, compact, Riemannian manifold without boundaries. Let $p>1$ and consider two Borel probability measures $\mu$ and $\nu$ with finite $p$-moments, such that $\mu$ is absolutely continuous with respect to the volume measure of $\Omega$. Then, with respect to the Wasserstein-$p$ distance \eqref{eq:wasdef}, there exists an optimal transport {\em map} $T$. Moreover, there exists a vector field $V$ such that 
\begin{equation}\label{eq:mccann}
T (x) =\exp_x [V] \, ,
\end{equation}
where $\exp$ is the exponential map with respect to the geodesic distance.
\end{theorem*}
\begin{remark*}
McCann's theorem holds for the general class of optimal transport problems with respect to $K_c$ (see \eqref{eq:Kc}) for a strictly convex cost function $c(x,y)=h(|x-y|)$. Furthermore, the vector-field $V$ is characterized in terms of the gradient of the Kantorovich potential, see \cite[Theorem 13]{mccann2001polar} for details.
\end{remark*}
The case of $p=1$ is similar: the circle decomposes into a union of geodesics lines (arcs), known as ``transport rays'', which intersect (potentially) only at their endpoints. On each such transport ray, the optimal transport map is monotone; see details in \cite[Section 3.1]{santa2015optimal} and \cite{feldman2002monge}.}
\begin{proof}[Proof of Lemma \ref{lem:circle_mono}]
First consider $p>1$. Choose any $x\in S^1$, and suppose first that $y=T(x)$ is transported clockwise from $x$, i.e., $y$ is clockwise to $x$ on the shorter arc between the two points. Then, any point $w\in S^1 \cap {\rm supp} (f_+)$ lying on that arc is also transported clockwise, since the vector field $V$ points clockwise on that arc. Hence, the set of points $x\in S^1$ for which $T$ transports clockwise is a union of arcs, and so each $J_{k}$ is a connected component of that set. If $y=T(x)$ is counterclockwise, the proof is analogous, and together these type of arcs cover ${\rm supp}(f_+)\cup {\rm supp}(f_-)$. The remaining points on $S^1$ are those where $f=0$, and by the extension of Lemma \ref{lem:steps} to the circle, it too is covered by disjoint arcs. This completes the proof for $p>1$. 

{\rev For $p=1$, the decomposition of the circle into transport rays, on each of which the optimal-transport map is monotone, yields an analogous proof.}
\end{proof}

{\rev {\it Proof of Theorem \ref{thm:wpcir} - continued:}} Suppose first that $J_1 \neq S^1$, i.e., it is not the case that all points $x\in {\rm supp}(f_+)$ are transported clockwise (or counterclockwise). Then, on each arc $J_k$ we can apply the analysis from the case of the interval. Suppose points are transported clockwise on $J_k$. Therefore the counterclockwise end of $J_k$ has to be in ${\rm supp} (f_+)$, and we can choose it as the starting point of the inductive process in Lemma \ref{lem:adj}.

 \begin{figure}
\centering
{\includegraphics[scale=1]{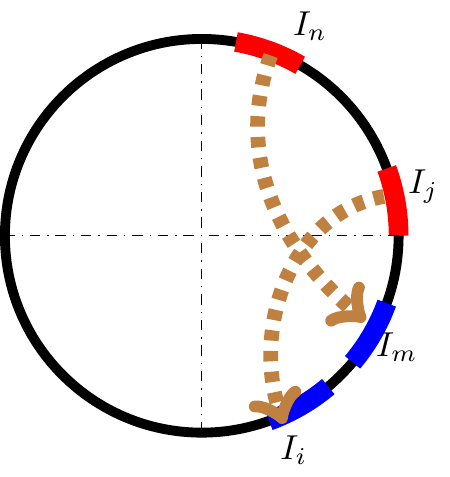}} 
\caption{
The ``lagging'' scenario as defined in the proof of Lemma \ref{lem:circle_mono}: red intervals are in the support of $f_+$, Blue intervals are in the support of $f_-$. The two dash-dotted lines are equator lines: They indicate that e.g., $I_j$ would transport to $I_i$ counterclockwise {\rev under} $T$.}
\label{fig:lagging}
\end{figure}

If, on the contrary, $J_1 = S^1$ assume that without loss of generality, all points $x\in {\rm supp}(f_+)$ are transported counterclockwise. If each $x\in {\rm supp}(f_+)$ is transported to the adjacent interval, the proof is completed. Assume otherwise, i.e., that we are in a {\bf ``lagging'' scenario} as in Figure \ref{fig:lagging}. We claim that this cannot be the optimal transport map for a minimizer of $W_p (f_+, f_-)$. This can be shown by constructing $g\in X_s$ for which $W_p(g_+,g_-)<W_p (f_+, f_-)$ as follows:

Let $Z(f) = \{z_1, \ldots z_N\}$ be the nodal points arranged in a counterclockwise order, with an arbitrary starting point. As before, denote by $I_j$ the arcs between $z_j$ and $z_{j+1}$, where $I_N$ is the arc between $z_N$ and $z_1$. Assume without loss of generality that $I_1 \subseteq {\rm supp}(f_+)$, and set $I_1$ for $g$ to be the same as for $f$. Adjacent to $I_1$ counterclockwise we set a negative arc $I_2$ precisely of the length $|I_1|$, and we define the new pushforward map $S_{\#}g_+ = g_-$ as the monotonic map from $I_1$ to $I_2$. We do so iteratively - we position positive intervals precisely of the size they had for $f$, and then a negative interval of the same size. In defining $g$, the $L^1$ and $L^{\infty}$ norms are unchanged, and so it the number of nodal points. While the map $S$ might not be the optimal transport map between $g_+$ to $g_-$, the overall transport distances have been reduced and so $W_p(f_+, f_-)=K_p(T)> K_p (S)>W_p (g_+, g_-)$.

Hence, we have extended Lemma \ref{lem:adj} to the circle, for all $p\geq 1$. Now, since the transport occurs only across nodal points to adjacent intervals, our Lagrange-multipliers analysis for the case of the interval applies, and we obtain the desired result.
\end{proof}

\begin{remark*}
The work of Delon, Salomon, and Sobolevskii \cite{delon2010fast} suggests an alternative route to prove Lemma \ref{lem:circle_mono} on the circle:
``lifting'' each measure on the circle to a periodic measure on the line, 
they study locally optimal transport maps between the ``lifted'' periodic measure.
These measures are similar to $F_{\mu}^{-1} \circ F_{\nu}$ up to a shift, and therefore in particular, are monotonic. We do not pursue this strategy further in this work, and refer to \cite{delon2010fast} for details.
\end{remark*}

\section{Star graphs}\label{sec:trees}

Given a positive integer $D$, we define the star graph $S_D = S_D (L_1, \ldots, L_D)$ as the quotient space of the disjoint union of $D$ intervals $I_j = [0,L_j)$ where $L_j>0$ for $1 ,\ldots , D$, under the equivalence relation $0_{I_j} \equiv  0$. For ease of notation, denote by $t_{j}$ the point $t\in I_j$ for every $1\leq j \leq D$ and every $t\in (0,L_j]$.

We will call the $0$ point the vertex of the star. Definition \ref{def:nodal} for $Z(f)$ extends to $I=S_D$. Consequently, the definitions of $X(c_{\infty},c_1,N,I)$ (see \eqref{eq:setX}) and
$X_s(c_{\infty},c_1,N,I)$ (see Definition \ref{def:Xs}) naturally extend to the case of star graph $I=S_D$ as well. {\rev The distance between any two points $x\in I_j$ and $y\in I_k$ is $|x-y|$ if $j=k$, and $x+y$ otherwise, i.e., if the two points are on different edges of the star, the geodesic distance between them is the length of the path going from $x$ to $y$ through the vertex $0$.}

\begin{figure}
{\includegraphics[scale=1]{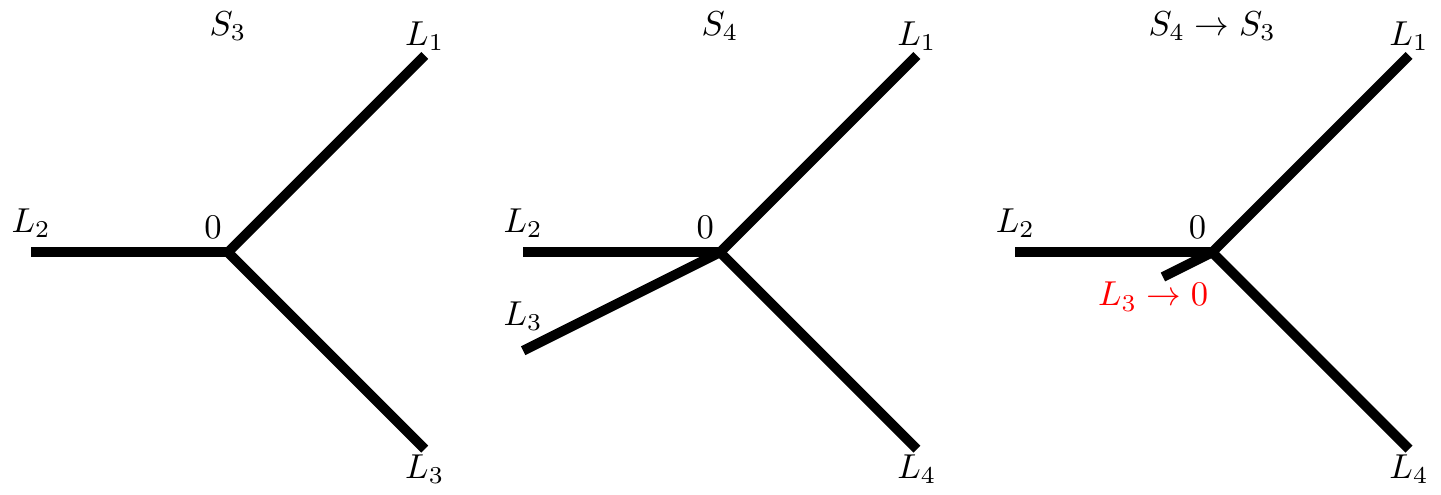}} 
\caption{Left and Center stars have $3$ and $4$ long edges, respectively. But in a star graph, the lengths of the edges matter: as the length $L_3 \to 0^+$, $S_4$ is deformed into $S_3$ (Right).
This metric structure in turn manifests itself in the minimization problem and in its solution, see Theorem \ref{thm:starequi}.}
\label{fig:stars}
\end{figure}

Stars are a class of spaces where we might expect the optimal dependence between $W_p (f_+,f_-)$ and the number of nodal points $N$ to be non-multiplicative, for the following reason: In \cite{cavalletti2021indeterminacy, de2021eigenfunctions},  the sharp (up to a constant) uncertainty quantification \eqref{eq:uncert} was derived for metric spaces which are essentially non-branching. Intuitively, this means that if $\ell_1 (t), \ell_2 (t):[0,1]\to \Omega$ are two ``generic'' geodesic lines of unit length, and $\ell_1 (t)=\ell_2(t)$ on an open subset of $[0,1]$, then $\ell_1=\ell_2$ everywhere; see \cite{rajala2014non} for details.  Star graphs, however, are certainly spaces with branching (and so are trees in general). Hence, in search for new types of dependencies between $\min W_p (f_+, f_-)$ and $N$, stars are excellent candidates over which to study the minimization problem stated in Question \ref{q:minim}.

The technical difficulty is that, on the star, we do not have explicit optimal maps such as \eqref{eq:Tmono} for the interval, nor do we even expect the existence of optimal transport maps, i.e., we expect a solution to the Kontorovich problem \eqref{eq:wasdef}, but not Monge's \eqref{eq:monge}. More broadly, stars are an example of metric graphs, on which the study of optimal transport is at a relatively early stage~\cite{erbar2021gradient, mazon2015optimal}.

{\bf Main results:} The key element of our analysis is that it is always ``useful'', in the sense of minimizing $W_p(f_+,f_-)$, to position one of the nodal points at the vertex of the star. We approach Question \ref{q:minim} on stars by establishing
a correspondence between transport over a star-vertex and transport on the real line (Lemma \ref{lem:star_vertex}). This equivalence allows us to reduce the minimization problem to a finite-dimensional constrained optimization problem (Theorem \ref{thm:starequi}). 

From there, the optimization problem bifurcates into several different cases, depending on both the topology and the lengths of the star's edges. In Sections \ref{sec:star_longN1}--\ref{sec:starD3_short}, we work the details of the following cases:
\begin{itemize}
    \item For an {\bf even number of sufficiently long edges,} the vertex $0\in S_D$ is equivalent to $
    D/2$ nodal points. Hence, we have a multiplicative uncertainty principle of the type (presented here for simplicity with $p=1$)
    $$ W_1 (g_+,g_-) \geq \frac{c_1^2}{4c_{\infty}}\frac{1}{\tilde{N}} \, , \qquad \tilde{N}=N-1 + 
    \frac{D}{2} \, ,$$
    see Section \ref{sec:star_longDeven}.
    \item For an {\bf odd number of sufficiently long edges,} the main complication is that there is an imbalance between the number of positive and negative edges around $0$. Nevertheless, we get the same type of inequality, only now with (Section \ref{sec:star_longOdd})
    $$ \tilde{N}\equiv N-1 + \tilde{D} \, , \qquad \tilde{D}\equiv \frac{(D+1)(D-1)}{2D} \, .$$
    \item When {\bf one of the edges is short,} no $\tilde{N}$ or $\tilde{D}$ type inequalities emerge, but the lower bounds we obtain ``interpolate'' between the case of a star with $D-1$ long edges (and a ``degenerate'' $D$-th edge with length zero) to the case of $D$ long edges (Section \ref{sec:starD3_short}).
\end{itemize}

In summary, for stars with $D$ long edges, the vertex $0\in S_D$ is effectively equivalent to
$D/2$ or $\tilde{D}=\frac{(D+1)(D-1)}{2D}$ 
 nodal points on the line, depending on whether $D$ is even or odd, respectively.
Finally, while we can interpolate between $D$ to $D-1$ edges by shortening/lengthening the edges, the general case of a star does not seem to
admit such a clean result; Indeed, an  ``uncertainty principle''-type lower bounds on $W_1(f_+,f_-)\cdot |Z(f)|$, such as \eqref{eq:uncertnew}, breaks even in relatively simple metric graphs, thus demonstrating that the non-branching property used in \cite{cavalletti2021indeterminacy} is indeed necessary.

\subsection{Stars -- the general framework}
Our strategy to prove the main result, Theorem \ref{thm:starequi}, consists of two parts. In Lemma \ref{lem:star_vertex}, we analyze the optimal transport problem in the case of a single nodal point on the star vertex. Lemma \ref{lem:adj_sd} generalizes Lemma \ref{lem:adj} on the optimality of transfer to adjacent sub-interval to the case of the star graph. 
The main technical difficulty here is that we cannot assume the existence of optimal transport maps (in the sense of Monge's problem \eqref{eq:monge}, as opposed to Kantorovich \eqref{eq:wasdef}), on graphs. Indeed Lemma \ref{lem:star_vertex} shows that already in simple settings of $Z(f)=\{0\}$ the optimal transport plan will not be induced by a map. Moreover, we cannot expect to have monotonicity in the strict sense, due to the geometry of the graph. Nonetheless, Lemma~\ref{lem:mono_sd} shows that the optimal transport plans satisfy a sufficient monotonicity-like property.

We begin with functions satisfying $Z(f) = \{0\}$.
\begin{lemma}\label{lem:star_vertex}
Let $D >M\geq 1$ be integers and consider $ f\in X(c_{\infty}, c_1, 1, S_D)$ where for every $x_j \in I_j \subseteq S_D$,
\begin{equation}
    f(x_j) 
     \left\{ \begin{array}{ll}
\geq 0 \, , & 1\leq j\leq  M \,, \\
\leq 0 \, , & M+1\leq j \leq  D\,,
\end{array} \right. .
\end{equation}
Define $g: \R \to \R$  as\footnote{\rev Here we identify $x\in\R$ with the point $x_j\in I_j$ with the exact same value. This is unambiguous since each $f_j$ is only defined on the respective edge $I_j$.}
\begin{equation}\label{eq:gstar_def}
g(x) =g (x;f)= g_+(x) + g_-(x) \equiv  \sum\limits_{j=1}^M {\rev f_j (x)} 
+ \sum\limits_{j=M+1}^{D}  {\rev f_j(-x)} 
\end{equation}
such that $g_+$ is supported on $(0,\max_{j=1,\ldots, M} L_j)$ and $g_-$ is supported on $(-\max_{j=M+1,\ldots, D} L_j,0)$. Then there is a surjective correspondence $\Phi :\Gamma( f_+, f_-) \to \Gamma(g_+, g_-)$ such that $K_p(\gamma) = K_p( \Phi[{\gamma}])$ for every $p\geq 1$ and $\gamma \in \Gamma (f_+, f_-)$. Hence, $$ W_p(g_+,g_-) = W_p(f_+,f_-) \, .$$
\end{lemma}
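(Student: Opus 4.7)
My approach rests on the fundamental observation that on the star graph, any two points lying on different edges are joined only through the vertex, so the geodesic distance between them is simply the sum of their distances from the vertex: for $x_j\in I_j$ with $1\le j\le M$ and $y_k\in I_k$ with $M+1\le k\le D$,
\[
 d_{S_D}(x_j,y_k)\;=\;x+y\;=\;|x-(-y)|,
\]
the right-hand side being exactly the Euclidean distance between the projections $\pi(x_j):=x$ and $\pi(y_k):=-y$ on the real line. Since $f_+$ is supported on positive edges and $f_-$ on negative ones, every pair in ${\rm supp}(f_+)\times {\rm supp}(f_-)$ satisfies this identity, so transport costs on the star depend only on the distances of source and target from the vertex, not on which edge they lie on. This is precisely the structure encoded by $g$ in \eqref{eq:gstar_def}.

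First I would introduce the projection $\pi$ above and verify directly that $g_+ = \pi_\# f_+$ and $g_- = \pi_\# f_-$ just by reading off \eqref{eq:gstar_def}. The candidate correspondence is then $\Phi[\gamma] := (\pi\otimes\pi)_\#\gamma$. That $\Phi[\gamma]\in\Gamma(g_+,g_-)$ follows from the standard formula for marginals of a pushforward measure, and the distance identity above immediately yields $K_p(\gamma)=K_p(\Phi[\gamma])$ for every $p\ge 1$, since the integrand in \eqref{eq:wasdef} is preserved pointwise on ${\rm supp}(\gamma)$.

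The heart of the proof, and the only real obstacle, is surjectivity of $\Phi$, because $\pi$ collapses several edges onto a single half-line and this collapse must be inverted in a measurable way. Given $\tilde\gamma\in\Gamma(g_+,g_-)$, I plan to lift it by distributing mass across edges in proportion to the local densities. Setting $\rho^+_j(x):=f_j(x)/g_+(x)$ on the positive side and $\rho^-_k(y):=|f_k(-y)|/|g_-(y)|$ on the negative side (well defined $\tilde\gamma$-a.e., nonnegative, and summing to $1$ on ${\rm supp}(g_\pm)$ respectively), I would define, for Borel $A_j\subseteq I_j$ with $j\le M$ and $B_k\subseteq I_k$ with $k>M$,
\[
 \gamma(A_j\times B_k) \;:=\; \int_{\pi(A_j)\times \pi(B_k)} \rho^+_j(x)\,\rho^-_k(y)\,\dd\tilde\gamma(x,y),
\]
extending by additivity to general Borel sets. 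The identities $\gamma(A_j\times S_D)=f_+(A_j)$ and $\gamma(S_D\times B_k)=f_-(B_k)$ then follow from $\sum_{j=1}^M\rho^+_j\equiv 1$ on ${\rm supp}(g_+)$ and $\sum_{k=M+1}^D\rho^-_k\equiv 1$ on ${\rm supp}(g_-)$, combined with the marginal structure of $\tilde\gamma$; and $\Phi[\gamma]=\tilde\gamma$ is immediate from the construction, so $\Phi$ is surjective.

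With cost preservation and surjectivity of $\Phi$ in hand, the equality $W_p(f_+,f_-)=W_p(g_+,g_-)$ follows at once: the sets $\{K_p(\gamma):\gamma\in\Gamma(f_+,f_-)\}$ and $\{K_p(\tilde\gamma):\tilde\gamma\in\Gamma(g_+,g_-)\}$ coincide, and hence so do their infima.
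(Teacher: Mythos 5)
Your proposal is correct and follows essentially the same strategy as the paper: ``fold'' the star onto the real line via the projection $\pi$ (under which geodesic distances from $\mathrm{supp}(f_+)$ to $\mathrm{supp}(f_-)$ become Euclidean), take $\Phi=(\pi\otimes\pi)_\#$, and establish surjectivity by lifting any plan $\tilde\gamma\in\Gamma(g_+,g_-)$ back to the star by allocating mass across edges proportionally. The one place where you diverge from the paper's write-up is the form of that proportional lift. You use the pointwise density ratios $\rho^+_j(x)=f_j(x)/g_+(x)$ and $\rho^-_k(y)=|f_k(-y)|/|g_-(y)|$ and define the lifted plan by integrating $\rho^+_j\rho^-_k$ against $\tilde\gamma$; this is manifestly a Borel measure, and the marginal and $\Phi[\gamma]=\tilde\gamma$ verifications reduce to $\sum_j\rho^+_j\equiv 1$ and $\sum_k\rho^-_k\equiv 1$ on the respective supports. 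The paper instead defines $\gamma_\eta$ on rectangles $A\cap I_i\times B\cap I_j$ via the set-level ratios $\|f_i\|_{L^1(A\cap I_i)}/\|g_+\|_{L^1(A)}$ (and similarly on the negative side); this coincides with your lift when the density ratio is constant on $A$, but for general measurable $A$ that expression is not additive in $A$, so read literally it does not define a premeasure. Your pointwise formulation is the correct disintegration and, in effect, tightens the surjectivity step; everything else --- cost preservation via the geodesic-distance identity, the marginal checks, and the conclusion $W_p(f_+,f_-)=W_p(g_+,g_-)$ --- proceeds identically to the paper.
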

\begin{proof}

For for every $\gamma \in \Gamma (f_+, f_-)$ and any two measurable sets $E_+, E_- \subseteq (0,\infty)$, define 
\begin{equation}\label{eq:Phidef}
\Phi[\gamma] (E_+, -E_-) \equiv \sum\limits_{j=1}^{M}\sum\limits_{i=M+1}^D \gamma(E_+ \cap I_j, -E_- \cap - I_i) \, ,    
\end{equation}
 where for every $A \subseteq (0,\infty)$ we define $-A \equiv \{-x ~~|~~ x\in A \}$. $\Phi[\gamma]$ is a Borel measure on $\R \times \R$ with  marginals $g_+$ and $g_-$, simply by additivity. That $K_p(\gamma) =~K_p(\Phi[\gamma])$ follows from definition \eqref{eq:wasdef}: the same amount of mass travels the same distance in both plans. To summarize, we have shown the inclusion $$\{K_p (\gamma) ~ ~| \gamma \in \Gamma (f_+, f_-) \} = \{K_p (\Phi[\gamma]) ~ ~| \gamma \in \Gamma (f_+, f_-) \} \subseteq \{K_p (\eta ) ~~|~~\eta \in \Gamma (g_+, g_-) \}\, .$$

To conclude that $W_p(f_+, f_-)= W_p (g_+,g_-)$ we need to prove inclusion in the other direction, i.e., to find for any $\eta \in \Gamma (g_+,g_-)$ a coupling $\gamma_{\eta}\in \Gamma (f_+, f_-)$ such that $\Phi [\gamma_{\eta}]=\eta$. We define $\gamma_{\eta}$ in a symmetric manner: for any two measurable sets $A , B \subseteq (0,\infty)$ and $1\leq i,j \leq D$, 
$$\gamma_{\eta} (A\cap I_i, B \cap I_j) \equiv \eta (A, -B) \frac{\|f_i\|_{L^1(A\cap I_i)}}{\|g_+\|_{L^1(A)}} \frac{\|f_j\|_{L^1(B\cap I_j)}}{\|g_-\|_{L^1(-B})}\, ,$$
and for any general sets $\mathcal{A}, \mathcal{B} \subseteq S_D$, then 
$$\gamma_{\eta} (\mathcal{A}, \mathcal{B}) \equiv \sum\limits_{i=1}^D \sum\limits_{j=1}^{D} \gamma_{\eta} (\mathcal{A} \cap I_i, \mathcal{B}\cap I_j )\, . $$
Again, by definition $\gamma_{\eta}$ is a Borel measure on $S_D \times S_D$. To verify that $\gamma_{\eta}\in \Gamma (f_+, f_-)$, we compute its marginals. Let $(x,y) \in {\rm supp} \gamma_{\eta}$. Necessarily $y$ is an element in one of the negative intervals $I_{M+1}, \ldots I_D$, then for any measurable set $A\subseteq (0, \infty)$
\begin{align*}
    \gamma_{\eta} ( A\cap I_i, S_D) &= \gamma_{\eta} \left( A\cap I_i, \bigcup\limits_{j=M+1}^{D}I_j\right) \\
    &=\sum\limits_{j=M+1}^{D} \gamma_{\eta} ( A\cap I_i, I_j) \\
    &= \eta(A, (-\infty,0)) \frac{\|f_i\|_{L^1(A\cap A_i)}}{\|g_+\|_{L^1(A)}}
    \sum\limits_{j=M+1}^{D}  \frac{\|f_j\|_{L^1(I_j)}}{\|g_-\|_{L^1(-\infty,0)}}\\
    &= \eta(A, (-\infty,0))\frac{\|f_i\|_{L^1(A\cap I_i)}}{\|g_+\|_{L^1(A)}} \cdot 1 = \|f_i\|_{L^1(A\cap I_i)} \, ,
\end{align*}
where at the last passage we used the fact that $g_+$ is the density of the second-coordinate marginal of any $\eta \in \Gamma (g_+, g_-) $, i.e., that $\eta(A, (-\infty,0))=\|g_+\|_{L^1(A)}$. The calculation of the second-coordinate marginals of $\gamma_{\eta}$ is analogous. 

Finally, to prove that $\Phi[\gamma_{\eta}]=\eta$, consider a pair of open sets $E_+, E_- \subseteq (0,\infty)$. By \eqref{eq:Phidef}
\begin{align*}
    \Phi[\gamma_{\eta}](E_+, -E_-) &= \sum\limits_{j=1}^M\sum\limits_{i=M+1}^D \eta (E_+, -E_-)\frac{\|f_i\|_{L^1(E_+\cap I_i)}}{\|g_+\|_{L^1(E_+)}} \frac{\|f_j\|_{L^1(E_-\cap I_j)}}{\|g_-\|_{L^1(-E_-)}} \\
    &=\frac{\eta (E_+, -E_-)}{\|g_+\|_{L^1(E_+)}\cdot \|g_-\|_{L^1(-E_-})} \sum\limits_{j=1}^M \|f_j\|_{L^1(E_-\cap I_j)}\sum\limits_{i=M+1}^D \|f_i\|_{L^1(E_+\cap I_i)} \\
    &=\frac{\eta (E_+, -E_-)}{\|g_+\|_{L^1(E_+)}\cdot \|g_-\|_{L^1(-E_-)}} \|f_+\|_{L^1(E_+)} \|f_-\|_{L^1(-E_-)} \\
    &= \eta (E_+, -E_-) \, .
\end{align*}
\end{proof}

To solve the minimization problem on $S_D$, we first state an extension of Lemma \ref{lem:steps} to star graphs, where the proof is completely identical to that of Lemma~\ref{lem:steps}: 
\begin{lemma}\label{lem:steps_Sd}
For every $f\in X= X(c_{\infty},c_1,N,S_D)$, there exists a function $h \in X_s(c_{\infty}, c_1, N ,S_D)$ such that
\begin{enumerate}[label=(\roman*)]
\item $Z(f) = Z(h)$ 
\item For any sub-interval $J$ between to adjacent nodal points, i.e., $J=(z,z')$ where $z,z'\in Z(f)$ and $(z,z')\cap Z(f) = \emptyset$,
then $h \geq 0$ on $J$ if and only if $f\geq 0$ there, and
\begin{equation}\label{eq:Jhequalmass}
    \int_{J} h(x) \,dx = \int_{J} f(x) \, dx \, .
    \end{equation}
If $0 \not\in Z(f)$, \eqref{eq:Jhequalmass} also holds when $J$ is a
maximal star-like subgraph for which $0\in J$ but $Z(f)$ is disjoint from the interior of $J$.

\item $W_p (f_+,f_-) \geq W_p (h_+, h_-)$ for any $p\geq 1$ with equality possible only if $f\in X_s(c_{\infty}, c_1, N ,S_D)$ as well.
\end{enumerate} 
\end{lemma}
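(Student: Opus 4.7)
Since Lemma~\ref{lem:steps_Sd} is claimed to have a proof ``completely identical'' to that of Lemma~\ref{lem:steps}, my plan is to rerun the iterative mass-concentration scheme from that earlier proof, with the only new structural element being the ``hub'' component containing the vertex $0\in S_D$ when $0\notin Z(f)$. I would fix an optimal transport plan $\gamma\in\Gamma(f_+,f_-)$ with $K_p(\gamma)=W_p^p(f_+,f_-)$, and use it to partition $S_D\setminus Z(f)$ into finitely many connected nodal regions $J_0,J_1,\ldots,J_m$. These are of two types: (a) intervals $(z,z')$ between consecutive nodal points on a common edge (or meeting at $0\in Z(f)$), and (b) a single maximal star-like subgraph $J$ containing $0$, whenever $0\notin Z(f)$, bounded by $k\leq D$ nodal exit points $z^{(1)},\ldots,z^{(k)}$ on distinct edges. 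On each $J_i$ the function $f$ has a definite sign.

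Next, setting $f^0=f$ and $\gamma^0=\gamma$, I would inductively define $f^{i+1}$ and an updated plan $\gamma^{i+1}$ by concentrating, on the component $J_i$, all the ``exiting'' mass adjacent to its exit points. Concretely, for each exit point $z^{(\ell)}\in\partial J_i$ I set $\mu_\ell\equiv\gamma^i(J_i,A_\ell)$, the mass sent from $J_i$ through $z^{(\ell)}$ into the component $A_\ell$ of $S_D\setminus J_i$ meeting $z^{(\ell)}$, and define $f^{i+1}$ to coincide with $f^i$ outside $J_i$ and to equal $\pm c_\infty$ (with the sign inherited from $f^i|_{J_i}$) on an interval of length $\mu_\ell/c_\infty$ abutting $z^{(\ell)}$, for each $\ell$, and $0$ elsewhere on $J_i$. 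Because $\sum_\ell\mu_\ell$ equals the total $L^1$-mass of $f^i$ on $J_i$, conservation of mass on $J_i$, preservation of $\|f\|_\infty$, and $Z(f^{i+1})=Z(f^i)$ all hold by construction, yielding conditions (i)--(ii). The new plan $\gamma^{i+1}$ couples the freshly concentrated mass near each $z^{(\ell)}$ with the same image set in $A_\ell$ that the corresponding old mass was coupled to; every particle of mass then travels at most the distance it did under $\gamma^i$, so $K_p(\gamma^i)\geq K_p(\gamma^{i+1})$ for every $p\geq 1$, with strict inequality whenever $f^{i+1}\neq f^i$.

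Taking $h=f^{m+1}$, conditions (i)--(ii) hold by construction, and (iii) follows from the telescoping inequality $K_p(\gamma)\geq K_p(\gamma^{m+1})$, with equality possible only if the concentration step was trivial at every stage, i.e.\ only if $f\in X_s(c_\infty,c_1,N,S_D)$ to begin with. The only point at which the star-graph case genuinely differs from Lemma~\ref{lem:steps} is the hub component $J$: there the redistribution places mass on $k$ non-overlapping sub-intervals, one per exit-bearing edge, rather than on the two endpoints of a single interval; since the exit points sit on distinct edges, no geometric conflict arises in this multi-exit concentration, and the argument that transport distances are non-increasing goes through unchanged. This is the only ``obstacle'', and it is essentially a bookkeeping matter rather than a new idea.
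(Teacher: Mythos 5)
Your proposal is correct and follows essentially the same approach the paper intends: the paper declares Lemma~\ref{lem:steps_Sd} to have a proof ``completely identical'' to that of Lemma~\ref{lem:steps}, and you faithfully rerun that iterative mass-concentration scheme component by component, correctly singling out the hub component (the maximal star-like subgraph containing $0$ when $0\notin Z(f)$) as the only structurally new piece. Your identification that on each component $J_i$ every particle of mass bound for $A_\ell$ must cross the exit $z^{(\ell)}$, so that concentrating that mass at full height $c_\infty$ against $z^{(\ell)}$ can only shorten transport distances, is exactly the mechanism driving the proof of Lemma~\ref{lem:steps}.

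One small remark: your assertion that ``since the exit points sit on distinct edges, no geometric conflict arises in this multi-exit concentration'' is a little glib. If $\mu_\ell > c_\infty r_\ell$ for some exit (where $r_\ell$ is the length of the edge-$\ell$ portion of the hub), the block of mass of length $\mu_\ell/c_\infty$ abutting $z^{(\ell)}$ necessarily overflows past the vertex $0$ onto other edges, and one must verify both that a feasible packing exists (it does, since $\sum_\ell \mu_\ell/c_\infty \leq |J_i|$, via a greedy fill) and that after the overflow the distribution of distances to each $z^{(\ell)}$ is still stochastically dominated by the original one. This bookkeeping is not entirely vacuous, but the paper itself glosses over it with the ``completely identical'' claim, so you are no less rigorous than the authors on this point.
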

We now proceed to prove an analog of Lemma \ref{lem:adj} for star graphs; the minimizers of $W_p (f_+,f_-)$ in $X_s(c_{\infty}, c_1, N ,S_D)$ are those where mass is transported only between adjacent sub-intervals. 
\begin{lemma}\label{lem:adj_sd}
Let $f\in X_s(c_{\infty},c_1,N, S_D)$. There exists $g\in X_s(c_{\infty}, c_1, N ,S_D)$ such that $W_p(f_+, f_-) \geq W_p (g_+, g_-)$ {\rev for every $p\geq 1$}, with the following property: 

If $\gamma \in \Gamma (g_+, g_-)$ is an optimal transport plan, then $\gamma$ only transport mass between adjacent intervals. By this, we mean that if $J$ and $J'$ are two {\em closed} sub-intervals between two adjacent nodal points (or the smallest star-like subgraph between the nodal points closest to $0$), then $\gamma (J \times J') \neq 0 $ only if $\partial J \cap \partial J' \neq \emptyset$.
\end{lemma}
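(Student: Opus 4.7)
The plan is to adapt the inductive shift strategy of Lemma \ref{lem:adj} to the star graph, separating non-adjacent transports into two classes: those that stay within a single edge of $S_D$ and those that cross the central region around the vertex $0$. I would eliminate each class in turn, producing intermediate functions whose transport costs decrease monotonically.

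First, to address within-edge non-adjacencies, I fix an edge $I_k$ and consider the sub-intervals of $I_k$ lying between consecutive nodal points. By the monotonicity-like property of Lemma \ref{lem:mono_sd}, the portion of an optimal plan $\gamma \in \Gamma(f_+, f_-)$ that both originates and terminates on $I_k$ behaves like an optimal plan for an interval problem. The inductive shift construction from the proof of Lemma \ref{lem:adj} then applies verbatim, sweeping from the leaf endpoint $L_k$ toward $0$ and eliminating all non-adjacent within-edge transports. Iterating over $k = 1, \ldots, D$ yields an intermediate $\tilde{f} \in X_s$ with $W_p(f_+, f_-) \geq W_p(\tilde{f}_+, \tilde{f}_-)$.

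Second, to address cross-vertex non-adjacencies, let $J_0$ be either $\{0\}$ (if $0 \in Z(\tilde{f})$) or the maximal star-like subgraph containing $0$ whose interior is disjoint from $Z(\tilde{f})$. Restricting attention to mass that crosses $J_0$, I would invoke an analog of the reduction in Lemma \ref{lem:star_vertex}: the positive-sign edges are unfolded onto $(0, \infty)$ and the negative-sign edges onto $(-\infty, 0)$, yielding a line function $g(\cdot; \tilde{f})$ whose transport across the origin has the same cost as the cross-vertex transport in $\tilde{f}$. I then apply the shift argument of Lemma \ref{lem:adj} to this equivalent line problem, and transfer the resulting plan back to $S_D$ via the surjective correspondence $\Phi$ from Lemma \ref{lem:star_vertex}.

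The main obstacle lies in the cross-vertex reduction: a line-shift that pushes mass toward ``the immediate positive (resp.\ negative) neighbor of the origin'' corresponds on $S_D$ to a family of edges rather than a single sub-interval, leaving freedom in how to redistribute the shifted mass back onto the star. I would exploit this freedom to maintain the step-function structure of height $\pm c_\infty$ and the correct $L^1$ mass on each edge, while preserving the adjacency relation in which $J_0$ counts as a single entity bordering the innermost sub-interval on each edge. Verifying that this bookkeeping is consistent with $g \in X_s(c_\infty, c_1, N, S_D)$ is the most delicate part; the underlying shift mechanics and cost reduction follow directly from the interval case combined with Lemma \ref{lem:star_vertex}.
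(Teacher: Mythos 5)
Your two-phase decomposition (within-edge shifts first, then a cross-vertex unfolding) is genuinely different from the paper's approach, which runs a single unified induction over ``depth from the leaf endpoints'' on all edges simultaneously. Unfortunately, the decomposition does not cleanly separate the problem, and this is where the proposal breaks. When you perform a within-edge shift on $I_k$ (Step 1), the push of intermediate sub-intervals toward the vertex may move mass that is coupled, under the optimal plan, to sub-intervals on \emph{other} edges. This is exactly the star-specific scenario the paper isolates as case~\emph{(iii)} in its proof of Lemma~\ref{lem:adj_sd}, depicted in Figure~\ref{fig:xyxpyp}: the pushed set $D\subseteq I_\ell$ is coupled to a third edge, and a priori the push could lengthen that transport. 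Ruling this out requires items~(1)--(2) of Lemma~\ref{lem:mono_sd} together with the auxiliary ``edge exchange'' construction $\bar f$, none of which appear in your Step~1, which only invokes the within-edge monotonicity of item~(3). So the claim that the shift from Lemma~\ref{lem:adj} ``applies verbatim'' on an edge is not justified.

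Step~2 has a second, independent gap: Lemma~\ref{lem:star_vertex} requires $Z(f)=\{0\}$ and a definite sign on each edge. After Step~1 you still have $N>1$ nodal points, and the function alternates sign along each edge, so the hypotheses fail and the unfolding $g(x)=\sum_{j\le M}f_j(x)+\sum_{j>M}f_j(-x)$ is no longer a valid reduction (positive and negative contributions at the same $x$ from different edges would cancel, breaking the $L^\infty$ and $L^1$ bookkeeping and the correspondence $\Phi$). In the paper, Lemma~\ref{lem:star_vertex} is only invoked \emph{after} adjacency is established, to reduce the residual near-vertex transport — by then the relevant piece of the function really does have $Z=\{0\}$ and single-signed edges near the vertex. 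To repair the proposal you would need the full scenario-(iii) exclusion from the paper's proof and a replacement for the Lemma~\ref{lem:star_vertex} reduction that accommodates multiple nodal points, which is essentially the paper's unified argument.
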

The new challenge in proving Lemma \ref{lem:adj_sd} is the absence of a monotonicity property. On the interval, the explicitly optimal transport map \eqref{eq:Tmono} is monotone. On the circle, monotonicity is a consequence of the exponential map form of \eqref{eq:mccann} (or of the more specific construction in \cite{delon2010fast}). On star graphs, while
we cannot hope for monotonicity in the usual sense, we show that a similar property (Lemma \ref{lem:mono_sd}) is sufficient to demonstrate
that the shift operations described in Lemma \ref{lem:adj} can be implemented on $S_D$ as well (Lemma \ref{lem:adj_sd}).

\begin{proof}[Proof of Lemma \ref{lem:adj_sd}]
We define the following disjoint partition:
$$ S_D = J^* \cup ~ \bigcup\limits_{i=1}^D \bigcup\limits_{j=1}^{n(i)} J_{i,j} \, ,$$
where on each interval $I_i$ that comprises the star graph $S_D$, we denote the sub-intervals between adjacent nodal points by $J_{i,1}, \ldots , J_{i,n_{i}} \subset I_i$ where $n_i \geq 0$ for each $1 \leq i \leq d$, and where the subintervals are ordered by {\em decreasing} distances from $0$. Finally, let $J^*$ be the smallest star-like graph between the closest nodal point to $0$. If $0\in Z(f)$, then $J^* = \emptyset$. For ease of notation, it is useful to denote $J^* = J_{i, n(i)+1}$ for all $1\leq i \leq D$.

We repeat the inductive construction of Lemma \ref{lem:adj} (see Figure \ref{fig:star_adj} for illustration): for each $j$, we iterate over all  $1\leq i \leq D$ for which $j\leq n(i)$. Suppose without loss of generality that $J_{i,j} \cap {\rm supp} (f_+) \neq \emptyset
$ and that for some set of nonzero measure $E \subseteq J_{i,j} \cap {\rm supp} (f_+)$ we have that $\gamma(E, \cdot)$ is supported on non-adjacent intervals, i.e., 
$$ c \equiv \gamma \left( E, S_D\setminus \left(J_{i,j-1} \cup J_{i,j+1} \right) \right) > 0 \, .$$
For each non-adjacent interval where $\gamma (E, \cdot)$ is supported, we will shift that exact same mass to either $J_{i,j-1}$ or $J_{i. j+ 1}$, depending on the relative order between the intervals.

\begin{figure}
\begin{subfigure}[c]{1\textwidth}
\includegraphics[scale=1]{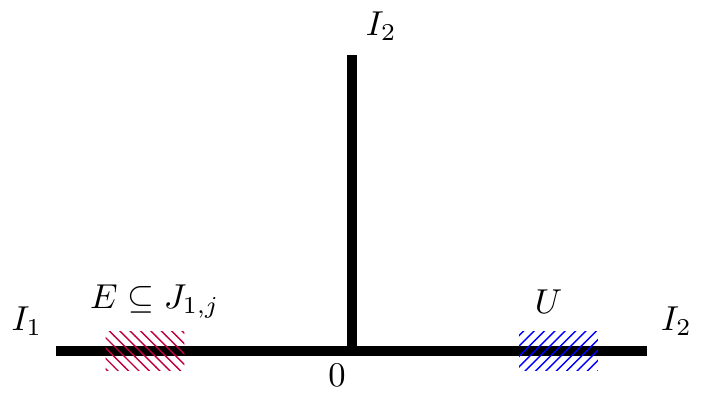}
\centering
\caption{}
\end{subfigure}
\begin{subfigure}[c]{1\textwidth}
\includegraphics[scale=1]{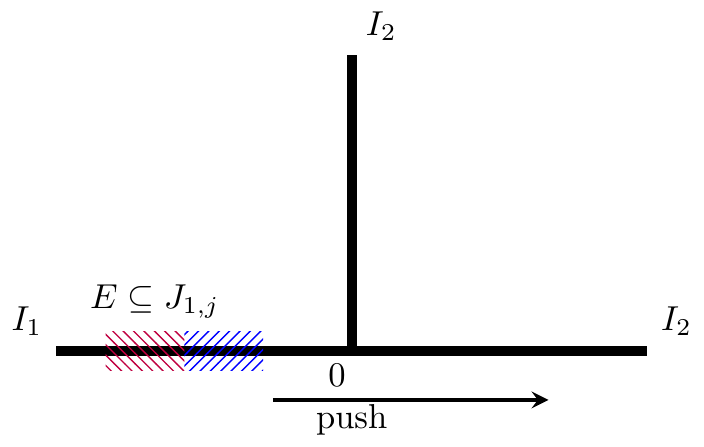}
\centering
\caption{}
\end{subfigure}
\begin{subfigure}[c]{1\textwidth}
\includegraphics[scale=1]{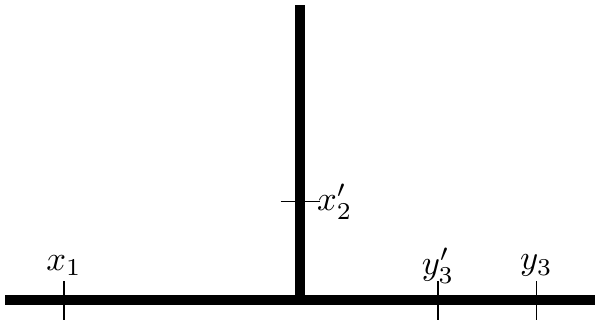}
\centering
\caption{}\label{fig:xyxpyp}
\end{subfigure}
\caption{The shifts of Lemma \ref{lem:adj_sd} on $S_3$. {\bf (A).} $E$ is being (partially) transported into $U$ under the optimal transport plan $\gamma$. {\bf (B).} After the shift, the relevant mass of $U$ is shifted to the interval adjacent to $E$, where mass on some of the intervals between $E$ and where $U$ previously was has
been shifted to the right. {\bf (C).} If $x_1\in E$, $y_3\in U$, i.e., $(x_1, y_3)\in {\rm supp}(\gamma)$, then this ``push'' could be harmful if $(x'_2,y'_3)\in {\rm supp}(\gamma)$, since then $y'_3$ could be pushed away from $x'_2$. We show that this scenario is forbidden when $\gamma$ is an optimal transport plan with respect to the cost function $h(x-y)=|x-y|^p$ for $p \geq 1$. 
}
\label{fig:star_adj}
\end{figure}

As in Lemma \ref{lem:adj}, these shifts might require to {\em push away from $J_{i,j}$} some nodal points to create room for the negative mass. See Figure \ref{fig:star_adj} for an illustration. Suppose then that some interval of length $A$ was shifted from $J_{i', j'}$ to $J_{i,j\pm 1}$, and that all or some of the mass between them has been pushed away from $J_{i, j\pm 1}$. To exclude these scenarios, we state a monotonicity lemma (in the spirit of \cite[Theorem 2.9]{santa2015optimal}):
\begin{lemma}\label{lem:mono_sd}
Let $\gamma$ be an optimal transport plan with respect to $W_p$ with {\rev $p\geq 1$,} let $1\leq i,j,k,l \leq D$, let $x,x',y,y'>0$, and let $(x_i,y_k), (x'_j,y'_l)\in {\rm supp}(\gamma)$. 
Then the following scenarios are impossible:
\begin{enumerate}
    \item $i,j,k$ are distinct, $k=l$,  $x'<x$, and $y'<y$.
    \item $j,k,l$ are distinct, $i=j$,  $x'<x$, and $y'<y$.
    \item $i=j$, $x'<x$, and $y'<y$ (it may or may not be that $i=k$, i.e., $(x_i,y_k)$ may or may not be part of the same edge).
\end{enumerate}
{\rev Here, as before, for every $1\leq i \leq D$ and every $x\in (0, L_i)$, we denote by $x_i$ the point on $I_i$ whose distance from the vertex $0$ is $x$.}
The same notation applies to $y_k$, $x'_j$, and $y'_l$.
\end{lemma}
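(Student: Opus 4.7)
The plan is to deduce all three scenarios from the $c$-cyclical monotonicity of the optimal transport plan $\gamma$ for the cost $c(X,Y)=d(X,Y)^p$, combined with the (strict) convexity of $u\mapsto u^p$ on $[0,\infty)$ for $p\geq 1$. In its two-point form, $c$-cyclical monotonicity (see e.g.\ \cite{santa2015optimal}) asserts that for any $(X_1,Y_1),(X_2,Y_2)\in{\rm supp}(\gamma)$,
\[
  d(X_1,Y_1)^p + d(X_2,Y_2)^p \;\leq\; d(X_1,Y_2)^p + d(X_2,Y_1)^p .
\]
I would apply this with $(X_1,Y_1)=(x_i,y_k)$ and $(X_2,Y_2)=(x'_j,y'_l)$, expanding each of the four distances using the star-graph geodesic $d(a_s,b_t)=|a-b|$ if $s=t$ and $d(a_s,b_t)=a+b$ otherwise.

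In scenarios 1 and 2 the index assumptions force each of the four pairs $(x_i,y_k),(x'_j,y'_l),(x_i,y'_l),(x'_j,y_k)$ to cross the vertex, so every distance has the form ``source + target''; in particular
\[
  d(x_i,y_k)+d(x'_j,y'_l) \;=\; x+y+x'+y' \;=\; d(x_i,y'_l)+d(x'_j,y_k) .
\]
Under $x'<x$ and $y'<y$, the unordered pair $\{x+y,\,x'+y'\}$ majorizes $\{x+y',\,x'+y\}$ (same total, strictly larger spread), so strict convexity of $u\mapsto u^p$ for $p>1$ gives
\[
  d(x_i,y_k)^p + d(x'_j,y'_l)^p \;>\; d(x_i,y'_l)^p + d(x'_j,y_k)^p ,
\]
directly contradicting cyclical monotonicity. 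For $p=1$ the two sides coincide, and one concludes by noting that the swap is then cost-neutral: the plan $\gamma$ may be replaced by an equally optimal one that does not exhibit the forbidden configuration. Scenario 3 splits according to which of $i,j,k,l$ coincide. In mixed sub-cases some of the four distances degenerate to the $|x-y|$-type while the others retain ``source + target'' form, but the same majorization--plus--convexity comparison reverses the cyclical monotonicity inequality in each case; and the fully same-edge sub-case $i=j=k=l$ reduces directly to the one-dimensional interval monotonicity already used in the proof of Lemma \ref{lem:adj}, via the monotone map \eqref{eq:Tmono}.

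The main obstacle is the degeneracy at $p=1$, where strict convexity fails and cyclical monotonicity returns equality rather than a contradiction. The uniform remedy is the swap just described: equality in both sides of the cyclical monotonicity inequality says precisely that exchanging the targets between $(x_i,y_k)$ and $(x'_j,y'_l)$ leaves the transport cost unchanged, so an optimal $\gamma$ avoiding all of the forbidden configurations may always be selected, which is exactly what the application in Lemma \ref{lem:adj_sd} needs. A secondary, more clerical obstacle is the sub-case analysis for scenario 3, which requires careful bookkeeping of which of the four distances collapse to $|x-y|$-form when indices coincide, and verifying that the resulting mixed expressions still satisfy the convexity-based reversal of the cyclical monotonicity inequality.
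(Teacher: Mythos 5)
Your treatment of scenarios (1) and (2) for $p>1$ is essentially the same as the paper's: both invoke $c$-cyclical monotonicity of the optimal plan and strict convexity of $u\mapsto u^p$. The paper phrases the convexity step via explicit interpolation parameters $t\in(0,1)$, while you phrase it via a majorization observation ($\{x+y,\,x'+y'\}$ majorizes $\{x+y',\,x'+y\}$ since they have equal sums and $x+y$ strictly dominates), which is a clean repackaging of the same inequality. Your $p=1$ remedy differs from the paper's: the paper invokes the standard approximation of $|x-y|$ by strictly convex costs (citing \cite{santa2015optimal}), which yields an optimal plan inheriting the monotonicity; you instead propose a swap/selection argument exploiting cost-neutrality at equality. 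Both are reasonable, and both implicitly acknowledge the same subtlety (for $p=1$ the property holds for \emph{some} optimal plan rather than \emph{every} one, which is what the downstream Lemma \ref{lem:adj_sd} actually needs), but your swap argument as stated is hand-wavy --- a single pairwise swap can create new forbidden configurations, so one needs the limiting/approximation machinery or a more careful selection argument to close the loop.

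Where you genuinely go wrong is scenario (3). You anticipate ``mixed sub-cases'' in which some of the four distances are of $|x-y|$-type and others of $(x+y)$-type, and claim the same majorization--convexity comparison handles them. Neither part holds. Scenario (3) is to be read as $i=j$ \emph{and} $k=l$ (this is why the paper's proof says ``all four points lie on the same line''); then either all four points share one edge, or they split across two edges through the vertex, and in both cases all four relevant distances are of the \emph{same} type, so no mixed case arises. Moreover, even if one did want to analyze a mixed case (say $i=j=k\neq l$), the majorization argument breaks down there because the two sides of the cyclical-monotonicity inequality no longer have equal sums --- e.g.\ $|x-y|+(x'+y')$ generically differs from $(x+y')+|x'-y|$ --- so the ``same comparison'' you invoke simply does not apply. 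The correct and much shorter handling of scenario (3) is the paper's: observe that it is a one-dimensional situation (four collinear points) and appeal to the standard interval monotonicity already used in Lemma \ref{lem:adj}.
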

 The proof of Lemma \ref{lem:mono_sd} is given after we complete the proof of Lemma \ref{lem:adj_sd}.
 
Lemma \ref{lem:mono_sd} is a star-graph analog of the monotonicity property \eqref{eq:Tmono} of optimal transport on the line: item (3) is the most straightforward, since it implies that all four points lie on the same linear path. Item (1) (and analogously (2)) is depicted in Figure \ref{fig:star_adj}(C) with $i$, $j$, and $k$ being distinct, $k=l$, i.e., 
 $x_i$ and $x'_j$ do not lie on the same edges, and $y'_k$ is on the path between $x_i$ and $y_k$.

With the help of
Lemma \ref{lem:mono_sd}, we now make the following claim: if a set with nonzero mass between $J_{i,j}$ and $J_{i',j'}$ was pushed away from $J_{i,j}$, then it cannot increase the overall transport cost. 

As in the case of the interval or the circle, proving this claim would show that our inductive construction decreases the transport cost, thus proving the lemma.

To proceed with the proof of the above claim, we note that such a ``push" away of a set $D$ from $J_{i,j}$ (towards the vertex $0$) could increase the overall transport cost in three scenarios: either {\it (i)} the optimal-transport plan $\gamma$ couples $D$ to a set {\em further away} from the vertex on $I_i$, i.e., in $J_{i,1}, \ldots, J_{i,j-1}$, or, {\it (ii)} $\gamma$ couples $D$ to an interval between $I_i$ and $D$ {\it (iii)} $\gamma$ couples $D$ to a {\em third} interval, i.e., not $I_i$ and not $I_{i'}$. We will now rule out both all these three scenarios.

First, {\it (i)} is impossible since there cannot be a point $x'_i \in J_{i,1} \cup \ldots \cup J_{i,j-2}$ which was previously transported into a pushed interval, due to the inductive construction. The only other way for the transport distance to increase with the shift is if $y'_k$ is on the path between $x_i$ and $y_k$ and that $(x_i, y_k), (x'_j,y'_k) \in {\rm supp}(\gamma)$ with $y'\neq y$.
Lemma \ref{lem:mono_sd}, item (3) similarly rules out scenario {\it (ii)}. We are left with scenario~{\it (iii)}, depicted in Figure \ref{fig:xyxpyp}. We remark that it is really this scenario that distinguishes the star graph from the interval.

Here, $i\neq j$, and so Lemma \ref{lem:mono_sd}, items (1)--(2), imply that $|x'|\geq|x|$.  Hence, up to relabeling of the edges, we have that  $(x_{1},y_{3}),(x'_{2},y'_{3})\in {\rm spt}(\gamma)$ with   $|x'|\geq |x|$, and
 $|y|>|y'|$. By a slight abuse of notation, let $x$ and $x'$ be
 the infimum of all values such that $x_1$ and $x'_2$ satisfy these hypotheses, i.e., the closest to the vertex $0$.
 
 To rule this scenario out, let us now define an auxiliary function, $\bar{f}$, which exchanges the values of $f$ on $(0,x)$ between $I_1$ and $I_2$. Formally, {\rev for every $j=1,\ldots D$ and every $t_j \in I_j$ }
$$ \bar{f} (t_j) = \left\{\begin{array}{ccc}
     f(t_2)& {\rm if} ~j=1  & {\rm and}~ t\in (0,x)\\
     f(t_1)& {\rm if} ~j=2  & {\rm and} ~t\in (0,x)\\
     f(t_j) & {\rm otherwise} &
\end{array}  \right.$$
Let us also define $\bar{\gamma}$ to be the coupling between $\bar{f_+}$ and $\bar{f_-}$ which is identical to $\gamma$, with the corresponding changes. We now proceed to make one helpful observation that $\bar{\gamma}$ 
is an optimal transport plan between $\bar{f}_+$ and $\bar{f}_-$.

Indeed, since $x_1$ is transported to $I_3$, then no other point on $(0,x_1)$ is transported to $(x_1, L_1)$. This would contradict monotonicity on the interval $I_1$ (see item (3) of Lemma \ref{lem:mono_sd}). The analogous statement holds for $x'_2$ as well. Hence, the ``exchange'' of intervals did not change the distances along which mass is transported, and so  $K_p(\gamma)=K_p(\bar{\gamma})$, and so $W_p(f_+,f_-) =K_p (\gamma) \geq W_p (\bar{f}_+, \bar{f_-})$. 

We now want to show an inequality in the other direction. 
Let $\gamma_*$ be an optimal transport plan for $\bar{f}$, then $\bar{\gamma_*}$ is a coupling of $f_+$ and $f_-$, and so $$W_p(f_+, f_-) = K_p (\gamma) = K_p(\bar{\gamma})\geq W_p(\bar{f}_+,\bar{f}_-) = K_p (\gamma_*)  =K_p (\bar{\gamma}_*) \geq W_p (f_+, f_-) \, .$$
Hence we have equalities throughout, and in particular $K_p(\bar{\gamma}) = W_p (\bar{f}_+, \bar{f_-})$, and so $\bar{\gamma}$ is an optimal transport plan from $\bar{f}_+$ to $\bar{f}_-$.

By construction $(x'_2,y'_3)\in {\rm spt}(\bar{\gamma})$ (as it were with $\gamma$), and due to the ``exchange'' now we have that $(x_2, y_3)\in {\rm spt}(\bar{\gamma})$. This violates item (3) in Lemma \ref{lem:mono_sd}. 

Thus we arrive at a contradiction. Hence $x'=x$. But now consider the situation where $(x_1. y_3), (x_2, y'_3)\in {\rm spt}(\gamma)$, with $y>y'$. By the exact same argument with
the roles of $y$ and $x$  interchanged, we arrive at a contradiction again. We have ruled out scenario {\it (iii)}, as depicted in Fig.\ \ref{fig:xyxpyp}.

To summarize, the proposed shift cannot increase the transport cost, and therefore Lemma \ref{lem:adj_sd} is proven.

\end{proof}

\begin{proof}[Proof of Lemma \ref{lem:mono_sd}]
{\rev Consider first the case of $p>1$, where $c(x,y)=h(x-y)= |x-y|^p$ is strictly convex.} First, recall that the support of $\gamma$ is $c$-cyclically monotone \cite[Theorem 1.38]{santa2015optimal}, i.e.,
\begin{equation}\label{eq:cCM}
    h(x_i-y_k)+h(x_k'-y_l') \leq h(x_i-y_l') + h(x_j'-y_k) \, .
\end{equation}
We will show in detail how the first of the three scenarios is impossible, as the others follow similarly.

Assume without loss of generality that $x_1\in I_1$, $y_3',y_3\in I_3$, and $x'_2 \in I_2$, as in Figure \ref{fig:xyxpyp}. Denote $a= |y|-|y'| > 0$. Then \eqref{eq:cCM} reads as
\begin{equation}\label{eq:h_ccm}
 h (|x|+|y'|+a) + h(|x'|+|y'|) \leq h(|x|+|y'|)+ h(|x'|+|y'|+a) \, .
 \end{equation}
 Suppose $|x'|\leq |x|$ first. Then $$|x'|+|y'| < |x'|+|y'|+a \leq  |x|+|y'|+a \, ,$$
$$ |x'|+|y'| \leq |x|+|y'| < |x|+|y'|+a \, . $$
We can express the ``sandwiched'' numbers, $|x|+|y'|$ and $|x'|+|y'|+a$, as linear interpolation between the endpoints $\alpha = |x'|+|y'|$ and $\beta = |x|+|y'|+a$,  
$$ |x'|+|y'|+a = t\alpha + (1-t) \beta \, , \qquad  |x|+|y'| = t \beta  + (1-t) \alpha \, ,$$
where 
 $$ t\equiv \frac{a}{a+|x|-|x'|} \in (0,1) \, .$$
However, $h$ is strictly convex for $p>1$, and so combined with \eqref{eq:h_ccm}, then
\begin{align*}
    h(\alpha)+h(\beta) &\leq h( t\alpha + (1-t) \beta) +h( t \beta  + (1-t) \alpha) \\
    &< th(\alpha) + (1-t)h(\beta ) + th(\beta)+ (1-t)h(\alpha) \\
    &= h(\alpha )+h(\beta) \, ,
\end{align*}
which is a contradiction.  

The case of $p=1$ is proven by approximating $h(x-y)=|x-y|$ by a sequence of strictly convex functions, see \cite{santa2015optimal}. Crucially, even though $x,y\in S_D$ and not on the real line, $h(x-y)$ is really a shorthand for $h$ operating on the geodesic distance (on the graph) between $x$ and $y$, and hence the procedure generalizes to the graph.

We have proven item (1) in the lemma. Item (2) is analogous, but with interchanging $x$ and $x'$ with $y$ and $y'$, respectively. Item (3) reduces to the standard analysis of the interval, since all four points lie on the same line.

\end{proof}

\begin{remark*}
While it is preferable to have $0\in Z(f)$, it might not be possible; A sufficient condition is to have {\em at least}
two sufficiently long edges. Up to a relabeling of the edges, we make the hypothesis that $$|I_1|, |I_2|> c_1/2c_{\infty} \, $$ i.e., all of the positive and negative mass could be allocated on $I_1$ and $I_2$, respectively, in intervals adjacent to $0$. This is a sufficient condition that avoids somewhat pathological star graphs for which $0$ cannot be efficiently utilized as a nodal point. Such graphs might resemble an interval attached to many short edges at its end, see e.g., Fig.\ \ref{fig:pathologicalStar}.
\end{remark*}

\begin{figure}
{\includegraphics[scale=1.3]{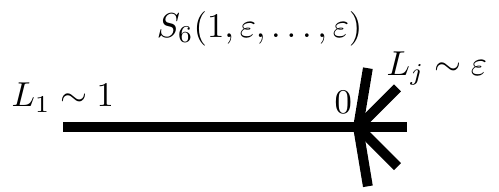}} 
\caption{A star with $6$ edges, where $L_2, \ldots ,L_6 \sim \varepsilon$ with $\epsilon<<1$. However, since most of the edges are very short, this star should be almost equivalent to an interval, in terms of transport and mass allocation. In particular, for a function with a single nodal point ($N=1$), that point cannot be the vertex $0$.}
\label{fig:pathologicalStar}
\end{figure}

 Since the minimizers of $W_p(f_+, f_-)$ on $S_D$ shift mass only between adjacent sub-intervals (Lemma \ref{lem:adj_sd}), we can now integrate our analysis of the single-point case (Lemma \ref{lem:star_vertex}) and conclude that it is always preferable to have a nodal point on the vertex. Hence, minimizers have the following form:
\begin{enumerate}
    \item The vertex is a nodal point, i.e., $0\in Z(f)$, and there exists $r_1, \ldots, r_D \geq 0 $ such that $f$ on $[0,r_j]\subseteq I_j$ is non-negative for $1\leq j \leq M$, and non-positive for $M+1\leq j \leq D$. The overall mass concentrated around $0$ is therefore
    $$\tilde{c}_1 \equiv c_{\infty} \sum\limits_{j=1}^D r_j  \, .$$
  \item The other $N-1$ nodal points are ``internal'', i.e., for each $z_{\ell}\in Z(f)$ there exists $1\leq j \leq D$ such that $z_{\ell}\in ]0, L_j[ $ and $f$ is an anti-symmetric step function around it with length $d_{\ell}>0$, as in the interval case of Section \ref{sec:interval}.
\end{enumerate}
Therefore, one needs to optimize only the positive step-widths $r_1, \ldots , r_D$ and $d_1, \ldots, d_{N-1}$. We summarize these results in the following theorem:
\begin{theorem}
\label{thm:starequi}
Suppose $S_D$ has at least two edges longer than $c_1/2c_{\infty}$. Question \ref{q:minim}, i.e., minimizing $W_p^p(f_+, f_-)$ over $X(c_{\infty}, c_1, N , S_D)$, is equivalent to finding $r_1, \ldots , r_D, d_1, \ldots, d_{N-1} >0$ and a configuration function $q:\{1,\ldots ,N-1\} \to \{1, \ldots, D\}$ which minimize
\begin{subequations}\label{eq:mini_star_gen}
\begin{equation}\label{eq:g_star_gen}
    \sum\limits_{\ell=1}^{N-1} c_{\infty} d_{\ell}^{p+1} + W_p^p(g_+, g_- ) \, ,
\end{equation}
where
\begin{equation}
    g(x) \equiv c_{\infty}\sum\limits_{j=1}^M \mathbbm{1}_{[0, r_j]}(x) - c_{\infty}\sum\limits_{j=M+1}^D \mathbbm{1}_{[- r_j,0]}(x) \, ,
\end{equation}
subject to the mass conservation conditions
\begin{equation}\label{eq:mass_consSd}
    \left\{\begin{array}{cc}
         c_{\infty}\sum\limits_{j=1}^M r_j = c_{\infty}\sum\limits_{j=M+1}^D r_j   \equiv \frac12\tilde{c}_1 \, , & \, \, \\
         \tilde{c}_1 + 2c_{\infty}\sum\limits_{\ell=1}^{N-1}d_{\ell} = c_1 \, , &
    \end{array}\right.
\end{equation}
and the admissibility constraints
\begin{equation}
\label{eq:lengthfea}
         r_j + 2\sum\limits_{q(\ell)=j} d_{\ell} \leq L_j \, , \qquad  1\leq j \leq D \, .
\end{equation}
\end{subequations}
\end{theorem}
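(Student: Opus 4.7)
The plan is to assemble the three reductions already established—Lemmas \ref{lem:steps_Sd}, \ref{lem:adj_sd}, and \ref{lem:star_vertex}—into a single finite-dimensional description, then handle the one genuinely new ingredient: forcing a nodal point onto the vertex. First, Lemma \ref{lem:steps_Sd} lets me restrict the minimization to $X_s(c_\infty,c_1,N,S_D)$, so every candidate is a step function of height $\pm c_\infty$ living in neighborhoods of $Z(f)$. Next, Lemma \ref{lem:adj_sd} lets me assume that the optimal transport plan only couples adjacent subintervals; in particular, around each \emph{interior} nodal point $z_\ell$ the candidate is anti-symmetric with some width $d_\ell>0$, and by the same inverse-CDF computation as in \eqref{eq:singleBump} its contribution to $W_p^p(f_+,f_-)$ is exactly $c_\infty d_\ell^{p+1}$. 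Each such $z_\ell$ sits on a unique edge $I_{q(\ell)}$, giving the configuration function $q:\{1,\dots,N-1\}\to\{1,\dots,D\}$.

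The step that does \emph{not} follow directly from the preceding lemmas, and which I expect to be the main obstacle, is showing that a minimizer must have $0\in Z(f)$. I would argue this by a local swap: suppose $0\notin Z(f)$, so the star-like neighborhood of $0$ lies entirely in, say, $\mathrm{supp}(f_+)$, while the nearest $f_-$-mass sits past some nodal point $z_\ell$ on an edge $I_{q(\ell)}$. Using the hypothesis $|I_1|,|I_2|>c_1/2c_\infty$, there is always room on an opposite long edge to relocate a positive slice of $f_-$-mass into a neighborhood of $0$ on that edge. The resulting step function is again admissible, has the same $|Z|$, $\|f\|_1$ and $\|f\|_\infty$, and converts a costly "single-direction crossing'' of the vertex into a genuine vertex nodal point where mass can be cancelled across $0$ via Lemma \ref{lem:star_vertex}. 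To make this rigorous I would compare the two transport plans using the $c$-cyclic monotonicity inequality \eqref{eq:cCM} and Lemma \ref{lem:mono_sd}; the pathological examples ruled out by the two-long-edges assumption are precisely those (such as Figure \ref{fig:pathologicalStar}) where this swap has no room to occur.

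Once $0\in Z(f)$ is established, the function $f$ decomposes into a "vertex part'' $\tilde f$ supported on the star-like subgraph $J^*$ adjacent to $0$ and $N-1$ anti-symmetric bumps centered at interior nodal points. The widths $r_1,\dots,r_D\ge 0$ of $\tilde f$ on each edge determine $\tilde f$ completely once we fix the sign pattern around $0$, and Lemma \ref{lem:star_vertex} gives a cost-preserving bijection $\Phi$ between $\Gamma(\tilde f_+,\tilde f_-)$ and $\Gamma(g_+,g_-)$ for the associated line function $g$ defined in \eqref{eq:gstar_def}. Because Lemma \ref{lem:adj_sd} forbids any coupling between mass in $J^*$ and mass past an interior node, the total transport cost decomposes additively as
\[
W_p^p(f_+,f_-) \;=\; \sum_{\ell=1}^{N-1} c_\infty d_\ell^{p+1} \;+\; W_p^p(g_+,g_-),
\]
which is exactly the objective \eqref{eq:g_star_gen}.

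It remains to identify the constraints. The condition $\int f=0$ together with $\|f\|_1=c_1$ becomes, after separating the vertex mass $\tilde c_1 = c_\infty\sum_j r_j$ from the interior bump masses $2c_\infty d_\ell$, the pair of equalities \eqref{eq:mass_consSd}; the balance inside $\tilde c_1$ between positive and negative edges forces $\sum_{j\le M} r_j = \sum_{j>M} r_j$. The requirement that the step function fit on each edge without its supports overlapping translates into the length bound \eqref{eq:lengthfea}, where the factor $2$ accounts for the two-sided support $(z_\ell-d_\ell,z_\ell+d_\ell)$ of each interior bump assigned to edge $I_j$. Conversely, given any data $(r_j,d_\ell,q)$ satisfying \eqref{eq:mass_consSd}–\eqref{eq:lengthfea}, one can explicitly construct a step function in $X(c_\infty,c_1,N,S_D)$ realizing the cost \eqref{eq:g_star_gen}, so the equivalence is bidirectional and the theorem follows.
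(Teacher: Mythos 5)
Your proposal assembles Lemmas \ref{lem:steps_Sd}, \ref{lem:adj_sd}, and \ref{lem:star_vertex} into the cost decomposition $\sum_\ell c_\infty d_\ell^{p+1} + W_p^p(g_+,g_-)$ and the mass/length constraints in exactly the way the paper does, so the approach is essentially identical. The one step you correctly flag as not an immediate consequence of the lemmas---that a minimizer must place a nodal point at the vertex---is also treated rather tersely in the paper (which simply ``integrates'' Lemma \ref{lem:star_vertex} after the adjacency reduction); your local-swap sketch is a reasonable way to make this rigorous, but as written it needs tightening: relocating $f_-$ mass to a neighborhood of $0$ on a long edge creates the vertex nodal point but must be coupled with the disappearance of the old nodal point $z_\ell$ so that $|Z(f)|$ stays exactly $N$, and one must also check that the two-long-edges hypothesis guarantees enough room for \emph{both} the relocated negative mass and the displaced positive mass, not merely for one of them.
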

Note that, since $g$ is given explicitly by the equivalence {\rev established in Lemma \ref{lem:star_vertex}}, see \eqref{eq:gstar_def}, and since it is a function on the line, $W_p(g_+, g_-)$ is given by the respective inverse CDFs as in \eqref{eq:wp_icdf}. Therefore optimizing $W_p^p(g_+, g_-)$ involves a direct, closed form calculation (see below). Moreover, if the edges are long, i.e., $L_j> c_1/c_{\infty}$ for any $1\leq j\leq D$, then the first constraint can be satisfied for any assignment of $r_j$'s and nodal point, i.e., for any choice of $q$. Hence, \eqref{eq:mini_star_gen} becomes computable in closed form {\em without} having to enumerate over all $(N-1)^D$ configuration functions $q$. 

We can also see what is the complication introduced by short edges - if e.g., $L_D \to 0$, then effectively the number of effective edges becomes $D-1$ (in the sense that only vanishingly small mass can be assigned to $I_D$), thus changing the solution. Hence, obtaining a closed-form expression for \eqref{eq:mini_star_gen} can be algebraically daunting. In fact, already for $N=1$, where $Z(f)=\{0\}$, there are some complications. We will work out a few cases.
\subsection{Long edges, ${\bf N=1}$}\label{sec:star_longN1}

We find the minimizers of $W_p(f_+,f_-)$ on $f\in X(c_{\infty}, \tilde{c}_1, N=1, S_D)$ where all edges are sufficiently long.\footnote{The notation of the $L^1$ norm by $\tilde{c}_1$ here will be convenient when we consider $N\geq 1$ in the next section.} As discussed above, it is always preferable to have the nodal point at $0$. Moreover, it would always be better to distribute the $L^1$ mass on all edges. This is because the $L^1$ and $L^{\infty}$ constraints imply that ``wasted'' edges (on which $f_j =0$ everywhere) lead to transport over larger distances, and so to a more expensive optimal transport cost. Hence, $W_p(f_+,f_-)=W_p(g_+,g_-)$, the equivalent function on the line given by \eqref{eq:g_star_gen}. Since the edges are sufficiently long, if $f$ has $1\leq M<D$ positive edges, then $g$ could be a general function with $\|g\|_{\infty} = \max (D, D-M) c_{\infty}$. Hence, by our analysis for the interval (Section \ref{sec:interval}), the optimal $g$ is of the form:
$$
g(x) = \left\{\begin{array}{ll}
    -(D-M)c_{\infty} \, , & x\in (-r_-, 0) \, ,  \\
     Dc_{\infty} \, , & x\in (0, r_+) \,  , \\
     0 \, , & {\rm otherwise} \, .
\end{array} \right.
$$
Crucially, an $f:S_D\to \R$ to which this $g$ is equivalent is only possible when all of $S_D$ edges (the different $I_j$'s) are sufficiently long. In that case, the corresponding $f$ will be of the form
\begin{equation}\label{eq:fstar_steps}
    f_j(x) = \left\{ \begin{array}{lll}
    c_{\infty} \, , & x\in (0, r_+) \, , &1\leq j \leq M  \, , \\
     -c_{\infty} \, , & x\in (0, r_-) \,  , & M+1\leq j \leq D \, , \\
     0 \, , & {\rm otherwise} \, ,
\end{array} \right.
\end{equation} where $r_+$ and $r_-$ are determined by the mass conservation relation \eqref{eq:mass_consSd}, which reduces to
\begin{equation}\label{eq:mass_SdN1}
Mr_+ = (D-M)r_-     \, .
\end{equation}
In these settings, minimizing $W_p (f_+, f_-)=W_p (g_+,g_-)$ reduces to finding the optimal parameters $r_+, r_->0$ and $1\leq M\leq D$.
\begin{proposition}[$N=1$ on a star]
Suppose $S_D$ has at least two edges longer than $c_1/2c_{\infty}$. Then, 
$${\rm arg}\min W_p ^p (f_+ , f_-) \qquad \quad {\rm over} \qquad \quad  X_s(c_{\infty},\tilde{c}_1, 1, S_D)  \, ,$$
is given by \eqref{eq:fstar_steps}
with $M=\lfloor D/2 \rfloor$, the lower integer part of $D/2$.
\end{proposition}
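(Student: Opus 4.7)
The plan is to apply Theorem~\ref{thm:starequi} with $N=1$ (so no interior widths $d_\ell$) to reduce the problem to minimizing $W_p^p(g_+,g_-)$ on $\R$, where $g$ is the line-equivalent function~\eqref{eq:gstar_def} with edge widths $r_1,\ldots,r_D\geq 0$ satisfying the mass balance $c_\infty\sum_{j\leq M} r_j=c_\infty\sum_{j>M} r_j=\tilde{c}_1/2$. I would then carry out three steps: uniformize the $r_j$'s within each sign, compute $W_p^p$ explicitly, and optimize over $M$.

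For the first step, since $g_+\leq Mc_\infty$ and $g_-\geq -(D-M)c_\infty$ pointwise, a direct CDF computation gives $F_{g_+}(y)\leq Mc_\infty y$ and hence $F_{g_+}^{-1}(t)\geq t/(Mc_\infty)$ on $(0,\tilde{c}_1/2)$, with equality iff $r_1=\cdots=r_M$; symmetrically $F_{g_-}^{-1}(t)\leq -t/((D-M)c_\infty)$ with equality iff $r_{M+1}=\cdots=r_D$. Substituting into \eqref{eq:wp_icdf}, the integrand is minimized pointwise exactly at the uniform configuration
$r_+:=\tilde{c}_1/(2Mc_\infty)$, $r_-:=\tilde{c}_1/(2(D-M)c_\infty)$. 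Moreover, leaving some edge unused is equivalent to replacing $M$ or $D-M$ by a strictly smaller integer, which increases the corresponding $r_\pm$ and hence (by the formula below) the cost, so an optimizer must use all $D$ edges.

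For this uniform configuration, $F_{g_+}^{-1}(t)-F_{g_-}^{-1}(t)$ is a positive affine function of $t\in(0,\tilde{c}_1/2)$ interpolating between $r_-$ and $r_+$, so direct integration yields
\[
W_p^p(g_+,g_-)=\frac{\tilde{c}_1}{2(p+1)}\cdot\frac{r_+^{p+1}-r_-^{p+1}}{r_+-r_-}=\frac{\tilde{c}_1}{2(p+1)}\sum_{k=0}^{p}r_+^{k}r_-^{p-k},
\]
with the continuous extension at $r_+=r_-$. Substituting for $r_\pm$, the problem reduces, up to an $M$-independent positive constant, to minimizing
\[
\phi(M)\equiv\sum_{k=0}^{p}\frac{1}{M^{k}(D-M)^{p-k}}
\]
over integers $M\in\{1,\ldots,D-1\}$.

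Finally, $\phi(M)=\phi(D-M)$ (reindex $k\mapsto p-k$), and each summand is log-convex on $(0,D)$ since $-k\log M-(p-k)\log(D-M)$ has second derivative $k/M^2+(p-k)/(D-M)^2\geq 0$; hence each summand is convex, and so is $\phi$. A convex function symmetric about $D/2$ attains its integer minimum at $\lfloor D/2\rfloor$ (equivalently $\lceil D/2\rceil$ when $D$ is odd). The main technical step is the pointwise inverse-CDF comparison that uniformizes the widths; the rest is elementary integration and discrete convexity.
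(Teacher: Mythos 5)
Your overall strategy mirrors the paper's: reduce to optimizing widths $r_j$ under the mass balance, uniformize the widths within each sign, compute the cost explicitly, and then optimize over $M$; the convexity-plus-symmetry argument you give in the last step is actually tighter than the paper's somewhat vague assertion that the cost is ``monotonic convex in $M$''. That said, there are two issues. First, a transcription error: you write $F_{g_-}^{-1}(t)\leq -t/((D-M)c_\infty)$, but at the uniform configuration $F_{g_-}^{-1}(0^+)=-r_-$ and $F_{g_-}^{-1}((\tilde{c}_1/2)^-)=0$, so the correct bound is $F_{g_-}^{-1}(t)\leq (t-\tilde{c}_1/2)/((D-M)c_\infty)=-r_-+t/((D-M)c_\infty)$. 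Your stated bound is violated as $t\to\tilde{c}_1/2$. Since your very next sentence (the difference interpolating between $r_-$ and $r_+$) matches the corrected bound and not the one you wrote, this is a typo rather than a conceptual mistake; still, the pointwise minimization claim rests on it, so it needs to be fixed.

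The second, more substantive gap is that the identity
$\frac{r_+^{p+1}-r_-^{p+1}}{r_+-r_-}=\sum_{k=0}^{p}r_+^{k}r_-^{p-k}$
and the subsequent definition of $\phi(M)$ make sense only for integer $p$, whereas the proposition concerns all $p\geq 1$. The paper also does not spell this out carefully, but your convexity argument simply does not parse for non-integer $p$. The fix is clean: use instead
$\frac{r_+^{p+1}-r_-^{p+1}}{r_+-r_-}=(p+1)\int_0^1\left(s r_+ +(1-s)r_-\right)^p\,ds$,
substitute $r_+=\tilde{c}_1/(2Mc_\infty)$, $r_-=\tilde{c}_1/(2(D-M)c_\infty)$, and observe that the integrand
$\left(\tfrac{s}{M}+\tfrac{1-s}{D-M}\right)^p$
is convex in $M$ on $(0,D)$ (convex, increasing $x^p$ composed with the convex map $M\mapsto s/M+(1-s)/(D-M)$), and the symmetry $M\leftrightarrow D-M$ is manifest upon also exchanging $s\leftrightarrow 1-s$ inside the integral. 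The convexity-plus-symmetry conclusion (integer minimum at $\lfloor D/2\rfloor$, or $\lceil D/2\rceil$) then holds for every $p\geq 1$. With those two repairs your proof is correct and slightly more complete than the paper's.
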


\begin{proof}


  Since the equivalent $g$ (see Lemma \ref{lem:star_vertex}) is a function on an interval, we can use the explicit formula in terms of the inverse CDFs, \eqref{eq:wp_icdf}. By direct calculation, the inverse CDFs, defined on the interval $[0,\tilde{c}_1/2]$, are $G_+^{-1}(t)= r_+2\tilde{c}_1^{-1}(t)$ and $G_-^{-1}(t)= -r_- +r_-2\tilde{c}_1^{-1}t$. Assume without loss of generality that $r_+\geq r_-$, and so $\alpha \equiv 2\tilde{c}_1^{-1}(r_+-r_-) \geq 0 $. Hence,
\begin{align*}
    W_p^p(g_+, g_-) &= \int\limits_{0}^{\tilde{c}_1/2}   \left[ r_+2\tilde{c}_1^{-1}t - (-r_- +r_-2\tilde{c}_1^{-1}t) \right]^p  \, dt \\
  &=\int\limits_{0}^{\tilde{c}_1/2}  \left[ \alpha t +r_- \right]^p  \, dt  \numberthis \label{eq:wpp_alpha} \geq 
\int\limits_{0}^{\tilde{c}_1/2}  \left[ r_- \right]^p 
 \, .
\end{align*}
Hence, $W_p^p(g_+,g_-)$ is minimized when $\alpha = 0$, i.e., when $r_+=r_-$, which by the mass conservation relation \eqref{eq:mass_SdN1} yields $M=D/2$.

{\bf When $D$ is even}, substitution of $M=D/2$ into \eqref{eq:wpp_alpha} yields

\begin{equation}\label{eq:wp_star0_even}
    D~{\rm even} \qquad \qquad W_p^p(g_+,g_-)= \frac{\tilde{c}_1}{2} r_-^p =  \frac{1}{2    D^p}\left(\frac{\tilde{c}_1}{c_{\infty}}\right)^p \tilde{c}_1 \, , \qquad r_- = r_+ = \frac{\tilde{c_1}}{c_{\infty}}\frac{1}{D} \, ,
\end{equation}
which is the same as in the single step function case for $D=2$, see \eqref{eq:singleBump}. 

{\bf When $D$ is odd}, $M=D/2$ is not an integer, and therefore not an admissible choice. We claim that the optimal choices are either $M=\lfloor D/2 \rfloor = (D-1)/2$ or $M=\lceil D/2 \rceil$. To see that, first note that the integrand in \eqref{eq:wpp_alpha} is monotonic in $\alpha$. Moreover, by the mass conservation relation \eqref{eq:mass_SdN1}, we get that
\begin{equation}\label{eq:alpha_M}
    \alpha = \alpha( M) =  \frac{1}{c_{\infty}}\left( \frac{1}{M} - \frac{1}{D-M} \right) \, ,
\end{equation} 
for any $   1\leq M\leq D$. Hence, $W_p^p (g_+, g_-)$ is monotonic convex in $M$ near its minimum at $D/2$, and the closest integer values, $\lfloor D/2 \rfloor$ and $\lceil D/2 \rceil$, minimize the cost.

To find the minimal cost, first, by direct computation of the integral in \eqref{eq:wpp_alpha}, we get
\begin{equation}\label{eq:wp_star0_odd_pre}
     W_p^p(g_+,g_-)=  \frac{1}{(p+1)\alpha} \left[\left(\frac{\alpha\tilde{c}_1}{2}+r_- \right)^{p+1}-r_-^{p+1} \right]  \, .
\end{equation}
By substitution of $M=\lfloor D/2 \rfloor= (D-1)/2$ into \eqref{eq:alpha_M}, we get that 
\begin{align*}
    \alpha \left( \lfloor \frac{D}{2}\rfloor\right) =  \frac{1}{c_{\infty}}\left( \frac{2}{D-1} - \frac{2}{D+1} \right)
    = \frac{4}{c_{\infty}(D-1)(D+1)} \, .
\end{align*}
Similarly, by the same constraint in \eqref{eq:wpp_alpha}, we have that
$$r_- c_{\infty} \frac{D+1}{2} = \frac{\tilde{c}_1}{2} \, \, \Longrightarrow r_- = \frac{\tilde{c}_1}{c_{\infty}(D+1)},\quad\text{and} \quad
r_+=
\frac{\tilde{c}_1}{c_{\infty}(D-1)} \, .$$
Substituting the expressions for $\alpha$ and $r_-$ into \eqref{eq:wp_star0_odd_pre}, we get
\begin{align*}
    W_p^p(g_+,g_-)&=  
    \frac{c_{\infty}(D-1)(D+1)}{4(p+1)}\left[\left(\frac{4\tilde{c}_1}{2c_{\infty}(D-1)(D+1)}
    +\frac{\tilde{c}_1}{c_{\infty}(D+1)} \right)^{p+1}-\left(\frac{\tilde{c}_1}{c_{\infty}(D+1)}\right)^{p+1} \right] \\
    &=\frac{1}{4(p+1)(D+1)^p(D-1)^p} \left(\frac{\tilde{c}_1}{c_{\infty}}\right)^p \tilde{c_1}\left[(D+1)^{p+1} - (D-1)^{p+1} \right] \, . 
\end{align*}
To compare this expression to the even case \eqref{eq:wp_star0_even}, let us set $p=1$. Then
\begin{equation}\label{eq:wp_star0_odd}
W_1 (g_+,g_-)= \frac{D}{2(D+1)(D-1)} \frac{\tilde{c}_1}{c_{\infty}} \tilde{c_1} 
=
\frac{\tilde{c}_1^2}{4\tilde{D}
c_{\infty}}
\, , 
\end{equation}
with $\tilde{D}=\frac{(D+1)(D-1)}{2D}$ as defined before.
Since
$$ \frac{1}{2(D+1)} 
\leq \frac{1}{4\tilde{D}}  
\leq \frac{1}{2(D-1)}\, ,$$
we easily see that $W_1$ lies in between the optimal cost for stars with $D-1$ and $D+1$. This is consistent with the intuition that as $D$ increases, the cost decreases.

Either $D$ is even or odd, the overall transport cost is cheaper
than the case of an internal point. This makes intuitive sense, because a nodal point on the vertex allows for more mass to be transported over a shorter distance, due to Lemma \ref{lem:star_vertex}. 
\end{proof}

\subsection{Long edges, ${\bf N\geq 1}$, ${\bf D}$ is even}\label{sec:star_longDeven}
In principle, now one can go to the general case of $N\geq 1$ and, using the minimization formulation \eqref{eq:mini_star_gen}, find the minimizers. Comparing the even and odd cases in \eqref{eq:wp_star0_even} and \eqref{eq:wp_star0_odd}, respectively, it seems that the case of $D$ even is more tractable.

The total cost of transport through the $0$-vertex depends on a single parameter $r_-=r_+$, which is not yet determined.   Equivalently, it depends on $\tilde{c}_1$, the amount of mass concentrated around the vertex, with $0\leq \tilde{c}_1 \leq c_1$. Since we assume that the edges are sufficiently long, then (based on our analysis for the interval in Section \ref{sec:interval}) the minimization problem \eqref{eq:mini_star_gen} simplifies by $d_1= \cdots = d_{N-1} = d_{\rm int}$. We therefore wish to minimize
\begin{equation}\label{eq:star_prew1}
W_p^p(f_+,f_-) = (N-1)c_{\infty}d_{\rm int}^{p+1}  + \frac{D}{2}c_{\infty}r_+^{p+1} \, ,
\end{equation}
subject to 
$$2(N-1)d_{\rm int}c_{\infty} + Dr_+ c_{\infty} = c_1 \, .$$
As in the case of the interval (Section \ref{sec:interval}), we use the method of Lagrange multipliers
$$\mathcal{L} = (N-1)c_{\infty}d_{\rm int}^{p+1}  + \frac{D}{2}c_{\infty}r_+^{p+1} - \lambda (2(N-1)d_{\rm int}c_{\infty} + Dr_+ c_{\infty} -c_1) \, .$$
Then 
$$ 0 = \partial_{d_{\rm int}} \mathcal{L} = (p+1)(N-1)c_{\infty}d_{\rm int}^p - 2\lambda (N-1)c_{\infty} \quad \Longrightarrow \quad d_{\rm int}^p = \frac{2\lambda}{p+1} \, ,   $$
and
$$ 0 = \partial_{r_+} \mathcal{L} = (p+1)
\frac{D}{2}c_{\infty}r_+^p - \lambda Dc_{\infty} \quad \Longrightarrow \quad r_+^p = \frac{2\lambda}{p+1} \, .  $$
In particular, $r_+=d_{\rm int}$. The constraint associated with $c_1$ then yields
$$d_{\rm int}=r_+ = \frac12\frac{1}{(N-1)+\frac{D}{2}}\frac{c_1}{c_{\infty}} $$
and the minimal optimal transport cost is 
$$W_p (f_+, f_-) = \frac{c_1}{c_{\infty}} \frac{c_1^{\frac{1}{p}}}{2^{1+\frac{1}{p}}} \frac{1}{\tilde{N}} \, ,  \qquad \tilde{N}\equiv N-1+
\frac{D}{2} \, .$$
In comparison with Theorem \ref{thm:w1_min} for the interval, we see that for a star with an {\em even number of edges} that are sufficiently long, $N$ is replaced by $\tilde{N}$, i.e., the vertex as a nodal point is equivalent to $
D/2$ nodal points on a line.

\subsection{Long edges, ${\bf N\geq 1}$, ${\bf D}$ is odd}\label{sec:star_longOdd} In this case, the same argument yields a much more cumbersome expression, and so we resolve it only for $p=1$. Lagrange multipliers method now yields
$$
\mathcal{L} =  (N-1)c_{\infty}d_{\rm int}^{2}  + 
\frac{1}{4\tilde{D}} \frac{\tilde{c}_1^2}{c_{\infty}
}  - \lambda \left(2(N-1)d_{\rm int}c_{\infty} + \tilde{c}_1 - c_1 \right) \, 
,\quad\text{for }\tilde{D}\equiv\frac{(D+1)(D-1)}{2D}.
$$
Here, since $r_+ \neq r_-$, it is easier to minimize the cost with respect to $\tilde{c}_1$, the total $L^1$ mass around the vertex. Then
 
$$0 = \partial_{\tilde{c}_1} \mathcal{L} = \frac{1}{2\tilde{D}} \frac{\tilde{c}_1}{c_{\infty}} -\lambda  \, ,  $$
and so
$$ 0 = \partial_{d_{\rm int}} \mathcal{L} = 2(N-1)c_{\infty}d_{\rm int} - 2\lambda (N-1)c_{\infty} \quad \Longrightarrow \quad d_{\rm int} = \lambda = \frac{1}{2\tilde{D}} \frac{\tilde{c}_1}{c_{\infty}} \, .   $$
The $L^1$ constraint now reads as
$$\frac{N-1}{\tilde{D}}
\frac{\tilde{c}_1}{c_{\infty}} c_{\infty} +\tilde{c}_1 = c_1 \quad \Longrightarrow \quad 
\tilde{c}_1 = \frac{\tilde{D}}{ N-1 +\tilde{D}} c_1 \,. $$
As a sanity check, we see that $\tilde{c}_1 \to c_1 $ as $D\to \infty$.
We note that, in this case, the sharp lower bound for $f\in X(c_{\infty}, c_1, N, S_D)$ reads like
\begin{align*}
W_1(f_+,f_-) &\geq (N-1)c_{\infty}d_{\rm int}^2 + \frac{1}{4\tilde{D}} \frac{\tilde{c}_1^2}{c_{\infty}}  \\
&\geq 
\frac{c_1^2}{4 (N-1+\tilde{D})c_{\infty}} 
\\
&\geq \frac{1}{4\tilde{N}} \frac{c_1^2}{c_{\infty}} \, , \qquad \text{for}\quad \tilde{N}\equiv N-1+\tilde{D} \, . \numberthis \label{eq:sharp_lb_DoddN}
\end{align*}
Compared with the case where the number of edges is even (see Sec.\ \ref{sec:star_longDeven}), $\tilde{D}$ may be viewed as the effective multiplicity of the nodal point at  the vertex.

The key difference between the lower bounds on $W_1$ for stars and the lower bounds obtained for non-branching spaces (the interval, the circle, and general $d$-dimensional surfaces in \eqref{eq:uncert}, see \cite{carroll2020enhanced, cavalletti2021indeterminacy, sagiv2020transport}), is that here the lower bound {\em does not admit a multiplicative `uncertainty principle'-like structure} of $4 W_1 \cdot N \geq \frac{c_1}{c_\infty}c_1$, where $A$ depends on the domain and norms of $f$. Nevertheless, for the cases in Sections \ref{sec:star_longDeven} and \ref{sec:star_longOdd}, we may still say that the structure is preserved with an {\em effective} $N$ that is replaced by $\tilde{N}$ to account for the particular geometry of the nodal point at the vertex of the star.
However, discussions below imply that the precise characterization of $\tilde{N}$ could be much more involved than only the degree $D$ with short edges present.

\subsection{${\bf D=3}$ with one short edge, ${\bf N\geq 1}$ and
${\bf p=1}$}\label{sec:starD3_short}
In the long-edge cases discussed above, the minimization problem
given in Theorem \ref{thm:starequi} is assumed to have a solution in the interior of the constraint set \eqref{eq:lengthfea}. To see further how the geometry affects the conclusion on the minimal optimal transport cost, we consider the special case of $D=3$, with two long edges and a third short one as an illustrative example. Our goal is to see how the limit $L_3\to 0^+$ interpolates between $S_3$ with three long edges, and $S_2\cong [-L_1, L_2]$.

First, note that to optimize the transport cost in the present case, it remains advantageous to include the node of the star in $Z(f)$. Moreover, the maximal amount of mass should be assigned to the short edge if its length is sufficiently small. Meanwhile, 
 away from the node, we retain the symmetric structure around the other points in $Z(f)$ that are located on the long edges. Near those points, we utilize earlier discussion and use $d_{int}$ to represent the length of the subintervals on which the functions are taken to be $\pm c_\infty$ with the total optimal transport cost given by
 $(N-1) c_{\infty}  d_{int}^2$.
 
 As before, to get an explicit expression for \eqref{eq:g_star_gen}, we need to work out $W_1 (g_+,g_-)$ as a function of the overall mass assigned to the vertex, $\tilde{c}_1$. Assume without loss of generality that $f\geq 0$ on $I_1$ and $f\leq 0$ on $I_2$ and $I_3$ near the node. For $L_3 \ll 1$, then the maximal amount of mass should be assigned to $I_3$, i.e., $r_3=L_3$ and $f(x)=-c_{\infty}$ on $I_3$. By the consideration above, we have that the equivalent $g:\R \to \R$ is of the form 
 $$ g(x) = \left\{\begin{array}{ll}
      c_{\infty} & x\in (-r_1, 0)  \, ,\\
      -2c_{\infty} &  x\in (0, L_3) \, , \\
      -c_{\infty} & x\in (L_3, r_2) \, ,\\
      0 & {\rm otherwise} \, .
 \end{array} \right. $$
 for two undetermined constants $r_1, r_2 > 0$. For $p=1$, we can use the CDF counterpart of \eqref{eq:wp_icdf}, $W_1 (g_+, g_-)= \|G_+-G_-\|_{L^1(\R)}$, where $G_{\pm}$ are the CDFs of $g_{\pm}$. By direct computation
 $$G_+(t) - G_-(t) = \left\{\begin{array}{ll}
      c_{\infty}(t+r_1) & t\in (-r_1, 0)  \, ,\\
      c_{\infty}r_1 -2c_{\infty}t &  t\in (0, L_3) \, , \\
      c_{\infty}r_1 -2c_{\infty}L_3 -c_{\infty}t & x\in (L_3, r_2) \, .
 \end{array} \right. $$
Since the overall mass conservation \eqref{eq:mass_consSd} reduces to $r_1=r_2+L_3=\tilde{c}_1/2c_{\infty}$, we have that 
\begin{align*}
    W_1 (g_+,g_-) &= \|G_+ - G_-\|_{L^1(\R)} \\
    &= \frac14 \tilde{c}_1 r_1 +\frac14 \tilde{c}_1 r_2 +\frac14 \tilde{c}_1L_3 - r_2c_{\infty}L_3 \\
    &= \frac{\tilde{c}_1^2}{4c_{\infty}} - r_2c_{\infty}L_3 \\ 
    &= \frac{\tilde{c}_1^2}{4c_{\infty}} - \left(\frac{\tilde{c}_1}{2c_{\infty}}-L_3\right)c_{\infty}L_3 \, .
    \end{align*}
Putting these considerations together, the minimization problem in \eqref{eq:mini_star_gen} reduces to
\begin{align*}
W_1(f_+, f_-) & = (N-1) c_{\infty}  d_{\rm int}^2 
+\frac{\tilde{c}_1^2}{4c_{\infty}} - \left(\frac{\tilde{c}_1}{2c_{\infty}}-L_3\right)c_{\infty}L_3 \\
&= c_{\infty} \left[
(N-1)  d_{\rm int}^2 
+\left(\frac{\tilde{c}_1}{2c_{\infty}} -
\frac{L_3}{2} 
\right)^2+
\frac{3}{4}L_3^2
\right]
\, ,
  \end{align*}
subject to the $L^1$ mass conservation constraint 
$$ 2(N-1)d_{\rm int}  + 
 \frac{\tilde{c}_1}{c_{\infty}} = \frac{c_1}{c_{\infty}} \, .$$
Thus, for $\beta \equiv L_3c_{\infty}/c_1$, we get the optimal transport cost 
\begin{equation}
\label{eq:d3cost}
 W_1(f_+, f_-) \geq c_{\infty}\left[\frac{1}{4N}\left( \frac{c_1}{c_{\infty}}-L_3 \right)^2 + \frac34L_3^2 \right]
 = \frac{c_1^2}{4c_{\infty}}
\frac{(1-\beta)^2 + 3N\beta^2}{N} 
\end{equation}
with equality for
$$d_{\rm int}
=\frac{1}{2N}
\left( \frac{c_1}{c_{\infty}}  -L_3
\right), \;\text{ and }\;
\frac{\tilde{c}_1}{c_{\infty}}
=
\frac{c_1}{Nc_{\infty}} + 
\frac{(N-1)
L_3}{N}\,. 
$$

\subsubsection*{$\beta$ as an interpolation parameter between $S_3$ and an interval.}
We can understand these expressions between the two limits of
either shortening or lengthening $I_3$, 
i.e., as the star graph deforms into an interval with $L_3\to 0^+$
on one hand and as 
third edge becomes long enough for us to recover the case of three long edges, as described in Sections \ref{sec:star_longN1} and \ref{sec:star_longOdd}. Note that our analysis involving the short edge holds precisely until the case where there is no distinction between $I_2$ and $I_3$ in terms of mass allocation. By the results in Section \ref{sec:star_longOdd}, the optimal mass around the origin for $D=3$ 
    is $\tilde{c}_1 =4c_1/(3N+1)$. Since near the vertex, $L_3$ and $L_2$ both contain the support of $f_-$, the threshold length for $L_3$ to enable the optimal $\tilde{c}_1$ thus corresponds to $\beta^* = c_{\infty}L_3/c_1 = \tilde{c}_1 /4c_1 = 1/(3N+1)$.
Indeed, we may rewrite the right hand side of
    \eqref{eq:d3cost} to get an equivalent form given by
    \begin{align*}
     W_1(f_+, f_-) \geq  
        \frac{c_1^2}{4c_{\infty}} \frac{3N+1}{N}\left(\beta-\frac{1}{3N+1}\right)^2
        +\frac{c_1^2}{4c_{\infty}(N+\frac{1}{3})} \, .
    \end{align*}
The lower bound is clearly a decreasing function of $\beta$ for $\beta \in [0, 1/(3N+1)]$, hence {\em interpolating}, as expected,  between the lower bound derived in \eqref{eq:uncertnew} for  $p=1$ and $\beta=0$ (i.e., $D=2$ for the case of an interval) and that given in \eqref{eq:sharp_lb_DoddN} with $D=3$ and $\tilde{D}=\frac{4}{3}$.

Meanwhile, for  small but nonzero
$L_3$,
the non-multiplicative nature of the lower bound on the right hand side of
\eqref{eq:d3cost} is evident. Compared to the cases in Sections \ref{sec:star_longDeven} and \ref{sec:star_longOdd}, we see that the multiplicative form is lost not only in terms of $N$ but also with respect to $\frac{c_1}{c_{\infty}}$.
\begin{remark*}
What can be said on stars such as in Fig.\ \ref{fig:pathologicalStar}, for which Theorem \ref{thm:starequi} does not apply? We outline the strategy for their analysis: first, the equivalence relation in Lemma \ref{lem:star_vertex} can be extended to a single nodal point {\em near} the vertex, on the long edge. The adjacency and monotonicity arguments (Lemmas \ref{lem:adj_sd} and \ref{lem:mono_sd}) seem to go through as well.  Then, the optimization problem in Theorem~\ref{thm:starequi} should be extended by adding another parameter to represent the location of that ``special'' nodal point, which is near the vertex. Finally, we remark that the condition for a long edge being $\geq c_1/2c_{\infty}$ is sufficient, but not necessarily sharp. The sharpness of this lower bound would become one interesting element in the study of Question \ref{q:minim} on general metric graphs.
\end{remark*}


\section{Outlook}
\label{sec:outlook}
An important message of this work is that the original problem of minimizing the transport cost over the specified function class is equivalent to a generalized minimal surface problem associated with {\em optimal domain partition}.
For one-dimensional domains, minimizing the cost reduces to optimally positioning the nodal points and locating masses around them. We believe that this approach can be generalized to more challenging cases, some of which are discussed below.

\subsubsection*{Multidimensional domains}
 
In an analogous way to the one-dimensional settings of this study, it seems that on bounded and regular domain $\Omega \subseteq \R^d$, the minimizers will be step functions concentrated near the interfaces between $f_+$ and $f_-$. If this is indeed the case,  the key remains to be the solution to the minimization problem
\begin{equation}\label{eq:multid}
\min\limits_{f \in X_s(1,c_1/c_{\infty},N, \Omega )} W_p(f_+, f_-) .
\end{equation}
It should be commented that, for step functions, the existence of a lower bound  (\eqref{eq:uncert}  \cite{carroll2020enhanced, cavalletti2021indeterminacy, sagiv2020transport}) combined with nucleation arguments such as \cite[Lemma 2.8]{novack2021least}, should yield the existence of minimizers. An element in $X_s(1,c_1/c_\infty,N, \Omega )$ corresponds to a unique partition of $\Omega $ into three disjoint subsets {\rev $\Omega_+, \Omega_-, \Omega_0 \subset \Omega $, where $\Omega_0 = \Omega \setminus (\Omega_+ \cup \Omega _-)$}, such that $$|\Omega_-|=|\Omega_+|=\frac{c_1}{2} \, , \qquad \mathcal{H}^{d-1} (\Gamma)=N \, , \qquad \Gamma \equiv  \partial \Omega_+ \cap \partial \Omega _-  \, ,$$ assuming that the partitions are sufficiently regular such that $\mathcal{H}^{d-1}(\Gamma)$ is well defined. Denoting the set of such partitions by $Y(c_1,N, \Omega)$, {\rev one may attempt a similar approach to the one presented in this work:  reducing the functional minimization problem \eqref{eq:multid}  to a purely geometric one:}
\begin{equation}
\label{eq:minsurf}
\min\limits_{f \in X(1,c_1,N, \Omega)} W_p(f_+, f_-) = 
\min\limits_{(\Omega_-, \Omega_+, \Omega_0)\in Y(c_1,N,\Omega)} W_p(\mathbbm{1}_{\Omega_+}, 
\mathbbm{1}_{\Omega_-}) 
\end{equation}
Clearly, the latter becomes an optimal partition problem or a minimal surface problem. That is, the problem under consideration may be formulated as an equivalent geometric question.
\begin{question}\label{q:minimgeo}
For a bounded and regular domain $\Omega\subset \R^d$, $p\geq 1$, $c_1>0$, and 
 $N>0$, what are the optimal partitions $(\Omega_-, \Omega_+, \Omega_0)$ of $\Omega$ 
 and the corresponding minimum value associated with the problem
\begin{equation}\label{eq:minpart}
\min\limits_{(\Omega_-, \Omega_+, \Omega_0)\in Y(c_1,N, \Omega)} W_p(\mathbbm{1}_{\Omega_+}, 
\mathbbm{1}_{\Omega_-})
    \end{equation}
\end{question}

This draws interesting connections to possibly other optimal domain partition problems such as those related to different optimal transport minimization problems, e.g., partitions defined by the optimal quantization (centroidal Voronoi tessellations) \cite{du1999sirev,merigot2021non}. One may find more discussions on other optimal domain partition problems in \cite{bucur1998existence}.  A variation of Question \ref{q:minimgeo} 
considered in \cite{buttazzo2020wasserstein, novack2021least,peletier2009partial,xia2021existence} is to modify the set of feasible partitions by removing the constraint on the prescribed measure of the nodal, and consider the minimization of $|\partial \Omega_+|+\lambda W_1(\mathbbm{1}_{\Omega_+}, 
\mathbbm{1}_{\Omega_-})$ where $|\partial \Omega_+|$ is the Hausdorff measure of the boundary set of $\Omega_+$ and $\lambda>0$ is a penalty constant.

The solution to  the purely geometric Question \ref{q:minimgeo} can be used to offer a new perspectives on the existing studies concerning \eqref{eq:uncert} in multiple-dimensions  \cite{steinerberger2020metric,sagiv2020transport,carroll2020enhanced,cavalletti2021indeterminacy} and 
to make further connections with 
the bilayer membrane limits \cite{peletier2009partial,
lussardi2014variational}.
In particular, the optimal transport problem admits a decomposition into a continuous family of one-dimensional optimal-transport problems. One of the main challenges is that, as we seek to 
solve \eqref{eq:minpart}, this decomposition changes as well.
Based on the discussion on the one dimensional examples, it is obvious that the optimal partitions may not be unique in general, even though they yield the same optimal transport cost.
While a complete solution is beyond the scope of this work, one might conjecture that $\Gamma$ is made of piecewise planar surfaces, which are the conventional minimal surfaces in the Euclidean space.

\subsubsection*{Metric graphs and Laplacian eigenfunctions} This work is a starting point for the study of the Wasserstein minimization problems on metric graphs, for which the star $S_D$ (Section \ref{sec:trees}) is perhaps the most basic nontrivial example. Our analysis for $S_D$ suggests what might be the key challenges for general metric graphs - first, that the associated minimization problems might be algebraically complicated, and second, that the equivalence principles (Lemma~\ref{lem:star_vertex}) {\rev are} not easily extended to general graphs. The second key challenge is to find a monotonicity-like property which generalizes Lemma \ref{lem:mono_sd} to non-star graphs.

While the study of optimal transport on metric graphs is relatively new \cite{erbar2021gradient}, the study of nodal sets of special functions on metric graphs is a well established field, see e.g., \cite{alon2018nodal, band2014nodal, band2012number, berkolaiko2014stability, colin2013magnetic, gnutzmann2003nodal, hofmann2021pleijel}, as well as \cite{berkolaiko2013introduction} and the references therein. In its heart, the question is a natural extension of Sturm's Oscillations Theory: given the $N$-th Laplacian eigenfunction on a certain metric graph, how many zeroes does it have? 

Independently, uncertainty principles for the Wasserstein distance such as \eqref{eq:uncertnew} were applied to generalize Sturm and Sturm-Hurwitz (sums of Laplacian eigenfunctions) type results in dimensions $d\geq 1$ \cite{carroll2020enhanced, cavalletti2021indeterminacy, sagiv2020transport, steinerberger2020metric}. Generally speaking, the approach proceeds in two steps: first, prove a lower bound of the type \eqref{eq:uncertnew}, and then, obtain an upper bound on $W(f_+,f_-)$ for the specialized type of functions under consideration, e.g., Laplacian eigenfunction. The current work might therefore open the way of connecting these two lines of works, though many open challenges remain before such a connection is established.

\subsubsection*{Minimization over more specialized function classes} Concerning the possible connection to properties of Laplacian eigenfunctions,
related studies on lower bounds and uncertainty-principles of Laplacian eigenfunctions on Riemannian manifolds (and ${\rm RCD}$ spaces in general) can be found in \cite{deponti2022indeterminacy, mukherjee2021sharp, steinerberger2021wasserstein}.
One may note that the key to get the new lower bounds like \eqref{eq:uncertnew} presented in this work is through the study of the minimization problem over the function class defined by, e.g.,~\eqref{eq:setX}. The latter class is large enough to allow minimizers taking on the form of step functions. On the other hand, eigenfunctions of elliptic operators are smooth. Thus, another natural research direction to explore is the study of  possibly different bounds, associated with similar minimization problems, but over classes of functions that are more regular or of special forms. In particular,
the special forms may correspond to discrete function spaces, and the problem of passing from 
discretizations of $L^{\infty}$ functions, where the minimization problem is finite-dimensional, to the continuum limit (studied in this paper), might be of independent interest.

\subsubsection*{Functional analytic perspective}
Inequalities such as \eqref{eq:uncert} and \eqref{eq:uncertnew} can be thought of as 
interpolation inequalities. On the lower-bound side, there are the 
$L^1$ and $L^{\infty}$ norms involving no derivatives. 
The upper bound consists of two terms, which can be
connected to derivative norms of different orders, respectively: first, $|Z(f)|$ is smaller than $\|D\mathbbm{1}_{{\rm supp}(f_+)}\|_1$, where the derivative $D$ is taken in the sense of functions of bounded variations \cite{evans2018measure, giusti1984minimal}; then, $W_p(f_+, f_-)$ can be viewed as norms of derivatives of negative order, e.g., the $W_2$ distance is 
related to the standard Sobolev $\dot{H}^{-1}$ norm; See \cite{loeper2006uniqueness, peyre2018comparison} for details. 
 
While this functional-analytic perspective is not used in this paper, it can lead to the exploration  of more general variational problems of the type Question \ref{q:minim}, e.g., by allowing for constraints not only in $L^1$ and $L^{\infty}$, but in other norms such as $L^p$ norms for $1<p<\infty$. Moreover, understanding \eqref{eq:uncert} as an interpolation inequality, \eqref{eq:uncertnew} can be thought of as a sharp interpolation inequality with optimal constants, of which an extensive literature exists \cite{del2002best, del2003optimal}.

\section*{Acknowledgments}
The authors would like to thank Ram Band for many invigorating discussions and help provided along the way, as well as for suggesting to study this minimization problem on metric graph and highlighting some of the challenges in that area. The authors would also like to thank Raghavendra Venkatraman for bringing to our attention an issue with one of the definitions in a previous version of this manuscript. Many thanks to Fabio Cavalletti and Tim Laux for useful comments and references. Thanks also to Zirui Xu for bringing references \cite{lussardi2014variational,peletier2009partial} to our attention. The research of Q.D.\ is supported in part by NSF DMS-1937254, DMS-2012562 and CCF-1704833. A.S.\ acknowledges the support of the AMS-Simons Travel Grant, and is supported in part by Simons Foundation
Math + X Investigator Award \#376319 (Michael I. Weinstein).

\subsection*{Data availability} Data sharing not applicable to this article as no datasets were generated or analysed during
the current study
\subsection*{Conflict of interest}  The authors have no conflicts of interest to declare that are relevant to the content of this article.

\bibliographystyle{amsplain}
\bibliography{bigBib}

\end{document}